\begin{document}
\newcommand{\bea}{\begin{eqnarray}}
\newcommand{\eea}{\end{eqnarray}}
\newcommand{\nn}{\nonumber}
\newcommand{\bee}{\begin{eqnarray*}}
\newcommand{\eee}{\end{eqnarray*}}
\newcommand{\lb}{\label}
\newcommand{\nii}{\noindent}
\newcommand{\ii}{\indent}
\newtheorem{theorem}{Theorem}[section]
\newtheorem{example}{Example}[section]
\newtheorem{counterexample}{Counterexample}[section]
\newtheorem{corollary}{Corollary}[section]
\newtheorem{definition}{Definition}[section]
\newtheorem{lemma}{Lemma}[section]
\newtheorem{remark}{Remark}[section]
\newtheorem{proposition}{Proposition}[section]
\renewcommand{\theequation}{\thesection.\arabic{equation}}
\renewcommand{\labelenumi}{(\roman{enumi})}
\title{\bf Failure extropy, dynamic failure extropy and their weighted versions}
\author{Suchandan Kayal\thanks {Email address:
		kayals@nitrkl.ac.in,~suchandan.kayal@gmail.com}
                    \\{\it \small Department of Mathematics, National Institute of
                    	Technology Rourkela, Rourkela-769008, India              }\\\\
                    {\it \small Revised version to appear in ``Stochastics and Quality Control"  (@ DE GRUYTER)           }
                    }
\date{}
\maketitle
\begin{center}
{\large \bf Abstract}
\end{center}
Extropy was introduced as a dual complement of the Shannon entropy (see \cite{lad2015extropy}). In this investigation, we consider failure extropy and its dynamic version. Various basic properties of these measures are presented. It is shown that the dynamic failure extropy characterizes the distribution function uniquely. We also consider weighted versions of these measures. Several virtues of the weighted measures are explored. Finally, nonparametric estimators are introduced based on the empirical distribution function.
\\
\\
{\large \bf Keywords:} Failure extropy, Dynamic failure extropy, weighted random variable, stochastic ordering, nonparametric estimators.
\section{Introduction}
Since its introduction, the Shannon entropy has been of primary interest in various applied fields of information theory. Recently, \cite{lad2015extropy} showed that a complementary dual function of the Shannon entropy exists. They called it extropy. Let $X$ be a nonnegative and absolutely continuous random variable with probability density function (pdf) $f$, cumulative distribution function (cdf) $F$ and survival function (sf) $\bar{F}=1-F.$ The extropy of $X$ is defined as (see \cite{lad2015extropy})
\begin{eqnarray}\label{eq1.1}
\mathcal{E}(X)=-\frac{1}{2}\int_{0}^{\infty}f^{2}(x)dx=-\frac{1}{2}E\left[f(X)\right].
\end{eqnarray}
From (\ref{eq1.1}), it is clear that $\mathcal{E}(X)\in[-\infty,0).$ The concept of extropy is usually applied to score the forecasting distributions based on the total log scoring method. We refer to \cite{gneiting2007strictly}, \cite{furuichi2012mathematical} and \cite{vontobel2012bethe} for further applications of extropy. \cite{qiu2017extropy} studied various properties of extropy for order statistics and record values.
Similar to the residual entropy, \cite{qiu2018residual} proposed extropy for the residual random variable. They studied some properties including monotonicity and characterization of the residual extropy. \cite{kamari2020extropy} introduced past extropy and obtained characterization result for the past extropy of the largest order statistic.

The Shannon differential entropy has some drawbacks. One of these is that it is defined for
the distributions having density functions. For example, the definition of the Shannon differential
entropy is not applicable for a mixture density comprised of a combination of Guassians and delta functions.
Due to this, \cite{rao2004cumulative} introduced cumulative residual entropy by substituting  sf in place of the pdf in the definition of the differential entropy. \cite{di2009cumulative} proposed cumulative entropy based on the cdf and investigated various properties. Motivated by the way, the cumulative residual entropy was developed, \cite{zografos2005survival} proposed the notion of the generalized survival entropy, which parallels the Renyi entropy. Inspired by the generalized survival entropy, \cite{abbasnejad2011some} introduced generalized failure entropy and discussed various properties with some characterization results. Very recently, \cite{abdul2019dynamic} borrowed the notion of the survival entropy in order to propose survival extropy of a nonnegative and absolutely continuous random variable. It is given by
\begin{eqnarray}\label{eq1.2}
\mathcal{E}_{s}(X)=-\frac{1}{2}\int_{0}^{\infty}\bar{F}^{2}(x)dx.
\end{eqnarray}
This measure was constructed after replacing pdf by sf  in  (\ref{eq1.1}). In this paper, we introduce failure extropy, dynamic failure extropy and their weighted versions. We point out that the failure extropy has been proposed similarly to the cumulative entropy (see \cite{di2009cumulative}) by substituting cdf in place of the pdf in (\ref{eq1.1}). Before presenting main results, we recall some notions of various stochastic orders. These are useful in deriving some ordering results based on the proposed measures in the subsequent sections. Let $X$ and $Y$ be two nonnegative and absolutely continuous random variables with pdfs $f$ and $g$, cdfs $F$ and $G$, sfs $\bar{F}=1-F$ and $\bar{G}=1-G,$ respectively. The hazard and reversed hazard rates of $X$ and $Y$ are respectively denoted by $h_{X},h_{Y}$ and $r_{X},r_{Y}.$ Further, denote the right continuous inverses of $F$ and $G$ by $F^{-1}$ and $G^{-1}$, respectively.
\begin{definition}
The random variable $X$ is said to be smaller than $Y$ in the
\begin{itemize}
	\item usual stochastic order, denoted by
	$X\le_{st}Y$, if $F(x)\ge G(x)$, for all $x$ in
	the set of real numbers;
	\item dispersive order, denoted by $X\le_{disp}Y$, if
	$F^{-1}(\alpha)-F^{-1}(\beta)\le
	G^{-1}(\alpha)-G^{-1}(\beta)$, for all $0<\beta\le
	\alpha<1;$
	\item hazard rate order, denoted
	by $X\le_{hr}Y$, if $h_{X}(x)\ge h_{Y}(x)$, for all
	$x>0$;
	\item reversed hazard rate order, denoted by $X\le_{rh}Y$, if  $r_{X}(x)\le r_{Y}(x)$, for all $x>0$;
	\item likelihood ratio order, denoted by $X\le_{lr}Y$, if $g(x)/f(x)$ is increasing with respect to $x>0.$
\end{itemize}
\end{definition}
The terms increasing and decreasing are used in non-strict sense.
The paper is organized as follows. In Section $2$, the failure extropy is proposed. We study various properties of it. In addition, the concept of the bivariate failure extropy is introduced. Section $3$ studies dynamic form of the failure extropy. Several properties similar to the failure extropy are investigated. It is shown that the dynamic failure extropy uniquely determines the distribution function. Further, a characterization for the power distribution is provided in Section $3$. Weighted versions of the failure extropy and the dynamic failure extropy are considered in Section $4.$  Properties related to the weighted measures are discussed. Nonparametric estimators of the measures are proposed in Section $5$. Real data sets are considered for the purpose of computation of the proposed estimators. Finally, concluding remarks are provided in Section $6.$

\section{Failure extropy\setcounter{equation}{0}}
In this section, we study some properties of the
failure extropy. The definition of extropy, proposed here, is analogous to the definition of the cumulative entropy by \cite{di2009cumulative}. Consider a nonnegative
absolutely continuous random variable $X$ with support $S_{X},$
cdf $F$ and
pdf $f$. The
failure extropy of $X$, denoted by $\mathcal{E}_{f}(X)$ is defined as (see also \cite{kundu2020cumulative} and \cite{nair2020dynamic})
\begin{eqnarray}\label{eq2.1}
\mathcal E_f(X)=-\frac{1}{2} \int_0^{\sup S_{X}} F^2(x)\mathrm dx.
\end{eqnarray}
Note that the above equation has been obtained after substituting the cdf of $X$ in place of the pdf in (\ref{eq1.1}). Like extropy, the failure extropy given by (\ref{eq2.1}) is always negative. Below, we obtain closed-form expressions for the
failure extropy of various distributions.
\begin{example}~~\vspace{-.5cm}\\
	\begin{itemize}
\item Let $X$ have uniform distribution in the interval
$(0,b)$. Then, $\mathcal{E}_{f}(X)=-b/6.$
\item Suppose $X$ has Type III extreme value
distribution with cdf
$F(x)=\exp\{\alpha(x-\beta)\},~x<\beta,~\alpha>0.$
Then,
$\mathcal{E}_{f}(X)=-(e^{2\beta\alpha}-1)/(4\alpha
e^{2\beta\alpha}).$
\item Let us assume that $X$ follows power distribution
with cdf $F(x)=x^{a},~0<x<1,~a>0.$ Then,
$\mathcal{E}_{f}(X)=-1/(2(2a+1))$.
   \end{itemize}
\end{example}
\begin{remark}
If the support of the absolutely continuous random variable is unbounded, then the failure extropy is equal to $-\infty.$
\end{remark}

In the following, we provide  lower as well as upper bounds of the
failure extropy when a random variable has bounded support. The lower bound can be achieved using
$F^{2}(x)\le F(x).$ On the other hand, the upper bound can be obtained utilizing Jensen's inequality for integrals.
\begin{proposition}\label{prop2.1}
For a nonnegative and absolutely continuous
random variable $X$ with support $[0,u]$ and cdf $F$, we have $(u<\infty)$
\begin{itemize}
\item $\mathcal{E}_{f}(X)\ge-\frac{1}{2}\int_{0}^{u}F(x)dx=-\frac{1}{2}[u-E(X)].$
\item $\mathcal{E}_{f}(X)\le-\frac{1}{2u}\left(\int_{0}^{u}F(x)dx\right)^{2}=-\frac{1}{2u}[u-E(X)]^{2}.$
\end{itemize}
\end{proposition}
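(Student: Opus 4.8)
The plan is to prove the two bounds separately, each reducing to a one-line inequality on the interval $[0,u]$ together with the elementary identity
\[
\int_{0}^{u}F(x)\,dx=\int_{0}^{u}\bigl(1-\bar F(x)\bigr)\,dx=u-\int_{0}^{u}\bar F(x)\,dx=u-E(X),
\]
which is valid because $X$ is nonnegative with $\sup S_{X}=u<\infty$, so that $E(X)=\int_{0}^{u}\bar F(x)\,dx$ and all the integrals in sight are finite (the integrands are bounded by $1$ on a bounded interval).

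For the lower bound I would start from the pointwise inequality $F^{2}(x)\le F(x)$, which holds for every $x\in[0,u]$ since $0\le F(x)\le1$. Integrating over $[0,u]$ gives $\int_{0}^{u}F^{2}(x)\,dx\le\int_{0}^{u}F(x)\,dx$; multiplying by $-\tfrac12$ reverses the inequality and yields $\mathcal{E}_{f}(X)=-\tfrac12\int_{0}^{u}F^{2}(x)\,dx\ge-\tfrac12\int_{0}^{u}F(x)\,dx$, and then the identity above turns the right-hand side into $-\tfrac12\,[u-E(X)]$.

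For the upper bound I would apply Jensen's inequality to the convex function $\phi(t)=t^{2}$ with respect to the uniform probability measure $d\mu(x)=\tfrac1u\,dx$ on $[0,u]$, obtaining $\bigl(\tfrac1u\int_{0}^{u}F(x)\,dx\bigr)^{2}\le\tfrac1u\int_{0}^{u}F^{2}(x)\,dx$, i.e. $\int_{0}^{u}F^{2}(x)\,dx\ge\tfrac1u\bigl(\int_{0}^{u}F(x)\,dx\bigr)^{2}$. Multiplying by $-\tfrac12$ and again substituting $\int_{0}^{u}F(x)\,dx=u-E(X)$ gives $\mathcal{E}_{f}(X)\le-\tfrac{1}{2u}\bigl(\int_{0}^{u}F(x)\,dx\bigr)^{2}=-\tfrac{1}{2u}\,[u-E(X)]^{2}$.

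There is no real obstacle here: both inequalities are immediate consequences of monotonicity and convexity. The only points deserving a moment's care are confirming finiteness of the integrals (immediate from $u<\infty$) and correctly normalizing Lebesgue measure to a probability measure before invoking Jensen. If one prefers to avoid measure-theoretic phrasing, the upper bound follows just as quickly from the Cauchy--Schwarz inequality $\bigl(\int_{0}^{u}F(x)\cdot1\,dx\bigr)^{2}\le\bigl(\int_{0}^{u}F^{2}(x)\,dx\bigr)\bigl(\int_{0}^{u}1\,dx\bigr)=u\int_{0}^{u}F^{2}(x)\,dx$, which is perhaps the cleanest route and avoids any reference to probability measures.
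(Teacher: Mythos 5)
Your proof is correct and follows exactly the route indicated in the paper: the lower bound from the pointwise inequality $F^{2}(x)\le F(x)$ and the upper bound from Jensen's inequality (equivalently Cauchy--Schwarz), together with the identity $\int_{0}^{u}F(x)\,dx=u-E(X)$. Nothing further is needed.
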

\begin{remark}
	Let $X$ follow uniform distribution in the interval (0,u). Then, from the above proposition,  lower and upper bounds of the extropy of uniform distribution can be obtained as $-u/4$ and $-u/8$, respectively. Clearly, $-u/4<
\mathcal{E}_{f}(X)(=-u/6)<-u/8.$
\end{remark}

Next, we take monotone transformations and examine
its effect on the failure extropy.
\begin{theorem}\label{th2.1}
Consider a nonnegative absolutely continuous
random variable $X$ with cdf $F$. Take $\psi:S\rightarrow T$, a strictly
monotone and differentiable function, where $S$ is the support of $X$ and $T$ is an interval, say $[u,v]$.   Denote the failure extropy
of $Y=\psi(X)$ by $\mathcal{E}_{f}(Y).$ Then,
\begin{eqnarray*}
\mathcal{E}_{f}(Y)=\begin{cases}
-\frac{1}{2}\int_{\psi^{-1}(u)}^{\psi^{-1}(v)}\psi'(x)F^{2}(x)dx, &
if~\psi~
\mbox{is strictly increasing;} \\
-\frac{1}{2}\int_{\psi^{-1}(v)}^{\psi^{-1}(u)}|\psi'(x)|F^{2}(x)dx,
& if~\psi~ \mbox{is strictly decreasing.}
\end{cases}
\end{eqnarray*}
\end{theorem}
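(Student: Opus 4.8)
\emph{Proof proposal.} The plan is to identify the cdf $G$ of $Y=\psi(X)$ in terms of $F$, substitute it into the defining integral \eqref{eq2.1} for $\mathcal{E}_f(Y)$, and then apply the change of variables $y=\psi(x)$; once $G$ is written down the rest is a routine substitution, the only care needed being the orientation of the integral and the sign of $\psi'$.

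First I would treat the case where $\psi$ is strictly increasing. Since $\psi$ maps $S$ onto $T=[u,v]$ (so in particular $u\ge0$ and $\sup S_Y=v$), for $y\in[u,v]$ we have $G(y)=P(\psi(X)\le y)=P(X\le\psi^{-1}(y))=F(\psi^{-1}(y))$, while $G\equiv0$ on $[0,u]$. Hence \eqref{eq2.1} gives $\mathcal{E}_f(Y)=-\tfrac12\int_u^v F^2(\psi^{-1}(y))\,dy$. Substituting $y=\psi(x)$, so that $dy=\psi'(x)\,dx$ with $\psi'(x)>0$ and the limits $y=u,v$ correspond to $x=\psi^{-1}(u)<\psi^{-1}(v)$, produces the first branch.

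For strictly decreasing $\psi$ the derivation is identical except that $\psi(X)\le y$ is now equivalent to $X\ge\psi^{-1}(y)$, so by absolute continuity of $X$ we get $G(y)=P(X\ge\psi^{-1}(y))=\bar F(\psi^{-1}(y))$ for $y\in[u,v]$. Plugging this into \eqref{eq2.1} and again substituting $y=\psi(x)$---now $\psi'(x)<0$, so the orientation of the integral reverses and a factor $|\psi'(x)|$ appears, with the limits becoming $x=\psi^{-1}(v)$ and $x=\psi^{-1}(u)$---yields $-\tfrac12\int_{\psi^{-1}(v)}^{\psi^{-1}(u)}|\psi'(x)|\,\bar F^2(x)\,dx$. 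I would note that this branch naturally involves $\bar F^2$; one should double-check this against the displayed statement, which prints $F^2$.

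Since the whole argument is an elementary substitution I do not anticipate a real obstacle; the points that need attention are the orientation reversal (hence the absolute value on $\psi'$) in the decreasing case, the standing requirement that $\psi$ be onto $[u,v]$ with $u\ge0$ so that \eqref{eq2.1} is meaningful for $Y$, and the appeal to absolute continuity of $X$ that identifies $P(X\ge\psi^{-1}(y))$ with $\bar F(\psi^{-1}(y))$.
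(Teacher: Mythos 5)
Your proposal is correct and, for the increasing case, follows exactly the paper's route: identify $G(y)=F(\psi^{-1}(y))$, insert it into the definition, and change variables $x=\psi^{-1}(y)$. The paper proves only this case and dismisses the decreasing case as "similar," so your careful treatment of that branch is the more complete argument. Your observation about the second branch is well founded: for strictly decreasing $\psi$ one has $G(y)=P(X\ge \psi^{-1}(y))=\bar{F}(\psi^{-1}(y))$ by continuity of $F$, and the substitution then yields $-\tfrac12\int_{\psi^{-1}(v)}^{\psi^{-1}(u)}|\psi'(x)|\,\bar{F}^{2}(x)\,dx$, so the displayed statement's $F^{2}$ in the decreasing branch appears to be a misprint that the paper's omitted "similar" proof never confronts. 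Your flagging of the standing assumptions ($u\ge0$, $\psi$ onto $[u,v]$) is also appropriate, since the definition of $\mathcal{E}_f(Y)$ integrates from $0$ and the contribution on $[0,u]$ vanishes because $G\equiv 0$ there.
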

\begin{proof}
We assume that $\psi$ is strictly increasing. The
cdf of $Y$ is $G(y)=F(\psi^{-1}(y)).$ As a
result, from the definition of the failure
extropy, we have
\begin{eqnarray}\label{eq2.2*}
\mathcal{E}_{f}(Y)=-\frac{1}{2}\int_{u}^{v}G^{2}(y)dy=-\frac{1}{2}\int_{u}^{v}F^{2}
(\psi^{-1}(y))dy.
\end{eqnarray}
Now, utilizing the inverse transform
$x=\psi^{-1}(y)$ in (\ref{eq2.2*}), the first part
follows. The second part can be proved similarly and is omitted. This completes the proof.
\end{proof}
The following corollary is immediate from
Theorem \ref{th2.1}. It shows that the failure
extropy is a shift-independent measure.
\begin{corollary}\label{cor2.1}
If $Y=aX+b$, where $a>0$ and $b\ge0,$ then $\mathcal{E}_{f}(Y)=a\mathcal{E}_{f}(X).$
\end{corollary}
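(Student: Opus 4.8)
The plan is to apply Theorem~\ref{th2.1} directly to the affine map $\psi(x)=ax+b$. Since $a>0$, this $\psi$ is strictly increasing and differentiable on the support $S$ of $X$, with the constant derivative $\psi'(x)=a$, and it maps $S$ onto an interval $T=[u,v]$; the requirement $b\ge 0$ guarantees that $Y=\psi(X)$ is again nonnegative, so the framework of the section applies. The first step is to invoke the strictly increasing case of Theorem~\ref{th2.1}, which gives $\mathcal{E}_{f}(Y)=-\frac{1}{2}\int_{\psi^{-1}(u)}^{\psi^{-1}(v)}a\,F^{2}(x)\,dx$, and then pull the constant $a$ outside the integral.

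It then remains only to identify the surviving integral with $\mathcal{E}_{f}(X)$ as defined in (\ref{eq2.1}). Because $\psi^{-1}(y)=(y-b)/a$, the endpoints transform as $\psi^{-1}(u)=\inf S$ and $\psi^{-1}(v)=\sup S$, so $\mathcal{E}_{f}(Y)=a\bigl(-\frac{1}{2}\int_{\inf S}^{\sup S}F^{2}(x)\,dx\bigr)$. Since $X$ is nonnegative we have $F(x)=0$ for $x\in[0,\inf S)$, whence $\int_{\inf S}^{\sup S}F^{2}(x)\,dx=\int_{0}^{\sup S_{X}}F^{2}(x)\,dx$, and therefore $\mathcal{E}_{f}(Y)=a\,\mathcal{E}_{f}(X)$.

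I do not anticipate any genuine obstacle: the statement is an immediate specialization of Theorem~\ref{th2.1}. The only points worth a sentence are the bookkeeping on the limits of integration (matching $\inf S$ with the lower limit $0$ in the definition) and the observation that the shift $b$ drops out precisely because $\psi'\equiv a$ does not involve $b$, which is the conceptual reason the failure extropy is shift-independent.
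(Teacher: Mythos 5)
Your proposal is correct and follows exactly the route the paper intends: the paper states the corollary is immediate from Theorem~\ref{th2.1}, and your specialization to $\psi(x)=ax+b$ with the bookkeeping on the integration limits is precisely that argument spelled out.
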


Let us now consider three nonnegative random
variables $X,~X_{\tau}^*$ and $\tau X$, where
$\tau>0.$ The cdf of $X_{\tau}^*$ is taken as
$F^{*}(x)=[F(x)]^{\tau}.$ This is known as the
proportional reversed hazard rate model. Now, we obtain inequality among the
failure extropies of $X,~X_{\tau}^*$ and $\tau
X$ (see also \cite{nair2020dynamic}).
\begin{proposition}\label{prop2.2}
The inequalities $\mathcal{E}_{f}(X^*_\tau)\le
\mathcal{E}_{f}(X)\le \mathcal{E}_{f}(\tau X)$
hold when $\tau\in(0,1]$. The inequalities get
reversed when $\tau\in[1,\infty).$
\end{proposition}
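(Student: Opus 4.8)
The plan is to prove the chain by treating its two halves, $\mathcal{E}_{f}(X^*_\tau)\le\mathcal{E}_{f}(X)$ and $\mathcal{E}_{f}(X)\le\mathcal{E}_{f}(\tau X)$, separately; each reduces to an elementary monotonicity fact, and no integral needs to be evaluated.

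For the first half I would use that $X^*_\tau$ has cdf $F^{*}=F^{\tau}$ and, since $F^{\tau}(x)<1$ if and only if $F(x)<1$, the same support as $X$. Hence, by the definition (\ref{eq2.1}),
\[
\mathcal{E}_{f}(X^*_\tau)=-\frac{1}{2}\int_{0}^{\sup S_{X}}\bigl(F(x)\bigr)^{2\tau}\,dx .
\]
Because $F(x)\in[0,1]$, the power map $t\mapsto t^{2\tau}$ satisfies $t^{2\tau}\ge t^{2}$ on $[0,1]$ exactly when $\tau\le 1$. So for $\tau\in(0,1]$ we have the pointwise bound $F^{2\tau}(x)\ge F^{2}(x)$; integrating over the support and multiplying through by the order-reversing constant $-\tfrac{1}{2}$ gives $\mathcal{E}_{f}(X^*_\tau)\le\mathcal{E}_{f}(X)$. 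For $\tau\in[1,\infty)$ the pointwise inequality, and therefore the conclusion, is reversed. (If $\sup S_{X}=\infty$, both integrands tend to $1$ at infinity, both failure extropies equal $-\infty$, and there is nothing to prove.)

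For the second half I would apply Corollary~\ref{cor2.1} with $a=\tau>0$ and $b=0$, which gives $\mathcal{E}_{f}(\tau X)=\tau\,\mathcal{E}_{f}(X)$. Then $\mathcal{E}_{f}(\tau X)-\mathcal{E}_{f}(X)=(\tau-1)\mathcal{E}_{f}(X)$, and since the failure extropy is always nonpositive, the sign of this difference is that of $1-\tau$: it is $\ge 0$ for $\tau\in(0,1]$ and $\le 0$ for $\tau\in[1,\infty)$. Combining the two halves yields $\mathcal{E}_{f}(X^*_\tau)\le\mathcal{E}_{f}(X)\le\mathcal{E}_{f}(\tau X)$ on $(0,1]$ and the reverse chain on $[1,\infty)$.

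There is no serious obstacle here; the only thing demanding care is the bookkeeping of inequality directions, since both halves involve a sign reversal — the map $z\mapsto -z/2$ applied to the integral in the first half, and multiplication by the nonpositive scalar $\mathcal{E}_{f}(X)$ in the second — so one must check that these reversals line up correctly with whether $\tau$ lies below or above $1$. A minor preliminary point, already used above, is to confirm that $X$ and $X^*_\tau$ have the same support, so that the two integrals run over the same interval.
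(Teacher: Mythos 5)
Your proof is correct and follows essentially the same route as the paper: the half $\mathcal{E}_{f}(X^*_\tau)\le\mathcal{E}_{f}(X)$ comes from the pointwise comparison $F^{2}(x)\le(\ge)\,F^{2\tau}(x)$ for $0<\tau\le1$ ($\tau\ge1$), and the half $\mathcal{E}_{f}(X)\le\mathcal{E}_{f}(\tau X)$ from Corollary~\ref{cor2.1} together with the nonpositivity of the failure extropy. Your version merely writes out the sign bookkeeping (and the unbounded-support degenerate case) that the paper leaves implicit.
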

\begin{proof}
The proof follows from Corollary \ref{cor2.1} and
the result $F^{2}(x)\le (\ge) F^{2\tau}(x)$
when $0<\tau\le1~(\tau\ge1)$.
\end{proof}
\begin{remark}
Let $X_{1},\ldots,X_{n}$ be a random sample drawn
from a population with cdf $F.$ Denote the
largest order statistic of the sample by
$X_{n:n}$. Further, it is known that the cdf of
$X_{n:n}$ denoted by $F_{n:n}(x)$ is equal to
$[F(x)]^{n},$ where $n$ is a positive integer.
Thus, as an application of Proposition
\ref{prop2.2}, we have $\mathcal{E}_{f}(n
X_{1})\le \mathcal{E}_{f}(X_{n:n})$.
\end{remark}

The notions of partial orderings between the random variables have been introduced in the
literature. These have useful applications in the theory of reliability and economics (see \cite{shaked2007stochastic}). Here, we
propose a partial preorder, which helps to find out a better system in the sense of less uncertainty. Let
$X$ and $Y$ be two nonnegative absolutely continuous random variables with cdfs $F$ and $G$, respectively.
Next, let us define uncertainty order based on the failure extropy.
\begin{definition}
A random variable $X$ is said to be smaller than
$Y$ in the sense of the
failure extropy, if $\mathcal{E}_{f}(X)\le
\mathcal{E}_{f}(Y).$ We denote $X\le_{fe} Y$.
\end{definition}
It is easy to see that the order defined above is a partial preorder, since it is reflexive, antisymmetric and transitive. There are many distributions for which the failure
extropy order holds. For example, let us consider two random variables $X$ and $Y$ with respective cdfs $F(x)=x^{a_{1}}$
and $G(x)=x^{a_{2}}$, $0<x<1$. Further, let
$0<a_{1}<a_{2}.$ Then, it can be clearly shown
that $\mathcal{E}_{f}(X)\le \mathcal{E}_{f}(Y)$,
that is, $X\le_{fe} Y$. Now, the question
arises. Is there any relation between
the usual stochastic order and the failure
extropy order? The answer is yes. The following
implication can be easily obtained from the definitions of the usual stochastic and failure extropy orders (see \cite{nair2020dynamic})
\begin{eqnarray}\label{eq2.3*}
X\le_{st}Y\Rightarrow X\le_{fe}Y.
\end{eqnarray}
Let $\{X_{1},\ldots,X_{m}\}$ be a collection of independent and identically distributed random observations. Denote the $l$th order statistic by $X_{l:m}$. It is not difficult to see that $X_{l:m}\le_{st}X_{p:m}$, for all $1\le l<p\le m.$ For samples with possibly unequal sample sizes, we have $X_{m:m}\le_{lr}X_{m+1:m+1}$ and $X_{1:m+1}\le_{lr}X_{1:m}$ (see \cite{shaked2007stochastic}). Again, the likelihood ratio order implies the usual stochastic order. Thus, from (\ref{eq2.3*}), we have the following observations:
\begin{itemize}
	\item $\mathcal{E}_{f}(X_{l:m})\le \mathcal{E}_{f}(X_{p:m}),$ for all $1\le l<p\le m;$
	\item $\mathcal{E}_{f}(X_{m:m})\le \mathcal{E}_{f}(X_{m+1:m+1});$
	\item $\mathcal{E}_{f}(X_{1:m+1}) \le \mathcal{E}_{f}(X_{1:m})$.
\end{itemize}
Further, it is well known that
$X\le_{disp}Y\Rightarrow X\le_{st}Y.$ Thus, from (\ref{eq2.3*}), the extended implication chain is
\begin{eqnarray}\label{eq2.4}
X\le_{disp}Y\Rightarrow X\le_{st}Y\Rightarrow
X\le_{fe}Y.
\end{eqnarray}

It can be seen that the variance and differential entropy act additively when they are computed for the sum of two independent random variables. This property is not satisfied by the survival extropy. Below, we show that for two independent random variables, a lower bound of the failure extropy of the sum is the maximum of the individual failure extropies (also see \cite{kundu2020cumulative})
\begin{proposition}
Consider two independent random variables $X$ and
$Y$ having log-concave density functions. Then,
$$\mathcal{E}_{f}(X+Y)\ge
\max\{\mathcal{E}_{f}(X),\mathcal{E}_{f}(Y)\}.$$
\end{proposition}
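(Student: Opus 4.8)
The plan is to obtain the bound from the stochastic comparisons already assembled in the excerpt, by showing that each summand is dominated by $X+Y$ in a strong enough order. First I would invoke the classical convolution fact (see, e.g., \cite{shaked2007stochastic}): if $Z$ has a log-concave density and $W$ is independent of $Z$, then $W\le_{disp}W+Z$. Applying it with $(W,Z)=(X,Y)$ and then with $(W,Z)=(Y,X)$ --- which is precisely why a log-concavity hypothesis is placed on each of $X$ and $Y$ --- yields $X\le_{disp}X+Y$ and $Y\le_{disp}X+Y$ at once. Next, since $X$ and $Y$ are nonnegative, $\inf S_{X+Y}=\inf S_X+\inf S_Y\ge\inf S_X$, so each dispersive comparison upgrades to the usual stochastic comparison, i.e. $X\le_{st}X+Y$ and $Y\le_{st}X+Y$; this is the first implication in the chain (\ref{eq2.4}). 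Finally, the second implication in (\ref{eq2.4}) --- equivalently (\ref{eq2.3*}) --- gives $X\le_{fe}X+Y$ and $Y\le_{fe}X+Y$, that is $\mathcal{E}_f(X)\le\mathcal{E}_f(X+Y)$ and $\mathcal{E}_f(Y)\le\mathcal{E}_f(X+Y)$, and taking the larger of these two lower bounds is the assertion.

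As a self-contained alternative that avoids the dispersive-order machinery, I would try a direct Jensen argument in the style of the cumulative-measure literature (\cite{kundu2020cumulative}). Conditioning on $Y$ gives $F_{X+Y}(t)=E_Y[F_X(t-Y)]$, and since $p\mapsto-\frac12 p^2$ is concave, $-\frac12 F_{X+Y}^2(t)\ge E_Y\!\left[-\frac12 F_X^2(t-Y)\right]$ for every $t$. Integrating this in $t$ and using the shift-invariance $\mathcal{E}_f(X+y)=\mathcal{E}_f(X)$ furnished by Corollary \ref{cor2.1} (with $a=1$) should relate $\mathcal{E}_f(X+Y)$ to $\mathcal{E}_f(X)$, and symmetrically to $\mathcal{E}_f(Y)$. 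I would carry out both routes and keep whichever comes out cleanest.

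The delicate point --- and the step on which the whole argument turns --- is the passage from ``$X$ dominated by $X+Y$'' to ``$\mathcal{E}_f(X)\le\mathcal{E}_f(X+Y)$'', because $\mathcal{E}_f$ integrates $F^2$ only up to the \emph{upper} endpoint of the support, and $\sup S_{X+Y}=\sup S_X+\sup S_Y$ is strictly larger than $\sup S_X$. In the dispersive route I must make sure that (\ref{eq2.4}) genuinely gives the inequality in the direction claimed rather than the reverse, since the dispersive order is at bottom a comparison of spread and $\mathcal{E}_f$ becomes more negative as the law spreads out. In the Jensen route the same issue surfaces concretely as a boundary term: after the change of variables $s=t-Y$, the inner integral runs over a range on which $F_X\equiv 1$ while $F_{X+Y}<1$, so everything reduces to showing that the contribution of $[\sup S_X,\sup S_{X+Y}]$ does not spoil the bound. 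I would treat that boundary term as the main obstacle, and I would also record at the outset the standing requirement that $S_X$ and $S_Y$ be bounded, since otherwise all three failure extropies equal $-\infty$ by Remark 2.1 and there is nothing to prove.
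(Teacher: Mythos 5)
Your first route is essentially the paper's own proof: the paper likewise invokes Theorem 3.B.7 of \cite{shaked2007stochastic} (log-concavity of the summand being added gives $X\le_{disp}X+Y$ and, by symmetry, $Y\le_{disp}X+Y$) and then concludes at once from the chain (\ref{eq2.4}), combining the two inequalities to get the maximum. The ``boundary-term obstacle'' you flag is precisely the step the paper disposes of by simply citing (\ref{eq2.3*})--(\ref{eq2.4}) without further discussion, so relative to the paper's framework your dispersive-order route is already complete and the alternative Jensen argument is not needed.
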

\begin{proof}
Under the assumption that $X$ has log-concave
density function and using Theorem $3.B.7$ of
\cite{shaked2007stochastic}, we obtain
$X\le_{disp} X+Y$. Thus, from (\ref{eq2.4}),
$\mathcal{E}_{f}(X)\le \mathcal{E}_{f}(X+Y).$
Further, considering the case that $Y$ has
log-concave density function, we get
$\mathcal{E}_{f}(Y)\le \mathcal{E}_{f}(X+Y).$
Combining these inequalities, we obtain the required
result.
\end{proof}

Next, we derive conditions under which the failure extropy of two random variables can be ordered. It is known that a random variable $X$ has decreasing
failure rate (DFR) if its failure rate function
is decreasing. We have
$X\le_{disp}Y$ under the conditions that $X\le_{hr}Y$ and $X$ or
$Y$ is DFR. Thus, from (\ref{eq2.4}), we have the
following result.
\begin{proposition}
If $X\le_{hr}Y$ and $X$ or $Y$ is DFR, then
$\mathcal{E}_{f}(X)\le \mathcal{E}_{f}(Y).$
\end{proposition}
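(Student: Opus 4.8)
The plan is to reduce the assertion to the implication chain~(\ref{eq2.4}) already recorded in this section, so that it suffices to verify that the hypotheses force $X\le_{disp}Y$. First I would invoke the classical result from reliability theory that links the hazard rate order with the dispersive order under an ageing hypothesis: if $X\le_{hr}Y$ and either $X$ or $Y$ has decreasing failure rate, then $X\le_{disp}Y$. This is a standard fact (see \cite{shaked2007stochastic}, in the section devoted to the dispersive order), of the same flavour as Theorem~$3.B.7$ of \cite{shaked2007stochastic} invoked earlier in this section, and it is precisely the statement quoted in the paragraph preceding the proposition.

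With $X\le_{disp}Y$ established, the conclusion is immediate. By~(\ref{eq2.4}) we have $X\le_{disp}Y\Rightarrow X\le_{st}Y\Rightarrow X\le_{fe}Y$, and by the definition of the failure extropy order the relation $X\le_{fe}Y$ is exactly $\mathcal{E}_{f}(X)\le\mathcal{E}_{f}(Y)$, which is what is required.

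The only substantive ingredient is the DFR-based implication $X\le_{hr}Y\Rightarrow X\le_{disp}Y$; the remaining steps are just bookkeeping through previously established implications. Thus I do not anticipate a genuine obstacle. Should one wish to give a self-contained argument, the real work would be in proving that implication directly: one would express the quantile functions $F^{-1}$ and $G^{-1}$ through the cumulative hazard functions, use that $X\le_{hr}Y$ makes $\bar G/\bar F$ nondecreasing, and exploit the DFR property to bound the derivative of $G^{-1}\circ F$ below by $1$, yielding $G^{-1}(\alpha)-G^{-1}(\beta)\ge F^{-1}(\alpha)-F^{-1}(\beta)$ for $0<\beta\le\alpha<1$. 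Since comparable reliability-theoretic facts are cited rather than reproved elsewhere in this paper, I would simply cite it and proceed as above.
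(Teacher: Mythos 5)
Your proposal is correct and matches the paper's own argument: the paper also invokes the standard fact that $X\le_{hr}Y$ together with $X$ or $Y$ being DFR implies $X\le_{disp}Y$, and then concludes via the implication chain (\ref{eq2.4}). No differences worth noting.
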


In the following proposition, we show that the failure extropy of $X$ can
be expressed in terms of the expectations of $X$
and $Y$. Denote
$R(x)=-\frac{1}{2}\int_{x}^{\infty}F(u)du.$
\begin{proposition}
Consider two nonnegative and absolutely
continuous random variables $X$ and $Y$ having
unequal means and let $X\le_{st}Y.$  If $
R(x)<\infty$ and $E[R(Y)]<\infty$, then
$$\mathcal{E}_{f}(X)=E[R(Y)]+E[R'(V)][E(Y)-E(X)],$$
where $V$ is nonnegative and absolutely
continuous random variable with pdf
$$k(v)=\frac{\bar{G}(v)-\bar{F}(v)}{E(Y)-E(X)},~v>0$$.
\end{proposition}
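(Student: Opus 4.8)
The plan is to rewrite the failure extropy as an ordinary expectation involving $R$ and then pass from $X$ to $Y$ by the mechanism underlying the probabilistic analogue of the mean value theorem. I would carry this out in two steps, the link between them being that $R$ is differentiable with $R'(x)=\tfrac12 F(x)$.

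First I would establish that $\mathcal{E}_f(X)=E[R(X)]$. Writing $F^2(x)=F(x)\int_0^x f(t)\,dt$ inside the integral defining $\mathcal{E}_f(X)$ and interchanging the order of integration (legitimate by Tonelli's theorem, the integrand keeping one sign), the inner $t$-variable is integrated out first, giving
\[
\mathcal{E}_f(X)=-\frac12\int_0^{\sup S_X}F^2(x)\,dx=-\frac12\int_0^{\sup S_X} f(t)\Big(\int_t^{\sup S_X}F(x)\,dx\Big)dt=\int_0^{\sup S_X} R(t)f(t)\,dt=E[R(X)],
\]
where the hypothesis $R(x)<\infty$ is exactly what makes the inner integral, and hence $R$ itself, well defined.

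Second I would compare $E[R(X)]$ with $E[R(Y)]$. Since $X\le_{st}Y$ with unequal means, $\bar G-\bar F\ge0$ and $\int_0^\infty(\bar G(v)-\bar F(v))\,dv=E(Y)-E(X)>0$, so $k$ is a genuine pdf. Writing $E[R(Y)]-E[R(X)]=\int_0^\infty R(z)\big(g(z)-f(z)\big)dz$ and integrating by parts --- with $\bar F(z)-\bar G(z)$ an antiderivative of $g(z)-f(z)$, the boundary term at $0$ vanishing because $\bar F(0)=\bar G(0)$ and the one at the upper end of the support vanishing by the finiteness of $R$ --- one obtains
\[
E[R(Y)]-E[R(X)]=\int_0^\infty R'(z)\big(\bar G(z)-\bar F(z)\big)dz=\big(E(Y)-E(X)\big)\int_0^\infty R'(z)k(z)\,dz=E[R'(V)]\big(E(Y)-E(X)\big).
\]
Substituting $E[R(X)]=\mathcal{E}_f(X)$ from the first step and rearranging then gives the asserted representation of $\mathcal{E}_f(X)$.

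The main difficulty here is bookkeeping rather than conceptual: one must be sure that Tonelli's swap is valid and that the boundary terms in the integration by parts really do vanish, and this is precisely where the two hypotheses are spent --- $R(x)<\infty$ makes $R$ finite, so that $k$, $E[R(X)]$ and $\int_0^\infty R(z)(g(z)-f(z))\,dz$ all make sense and the upper boundary term is zero, while $E[R(Y)]<\infty$ keeps the difference $E[R(Y)]-E[R(X)]$ meaningful. The stochastic order $X\le_{st}Y$ enters only to guarantee $k\ge0$; no log-concavity or higher-order ordering is required.
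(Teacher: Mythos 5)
Your route is essentially the paper's own: the paper's proof consists of the identity $\mathcal{E}_{f}(X)=E[R(X)]$ together with a citation of Di Crescenzo's (1999) probabilistic analogue of the mean value theorem, and your two steps are exactly that identity (obtained via Tonelli) plus an inline reproof of the cited theorem (via integration by parts, with $k$ checked to be a pdf). So there is no difference in strategy, only in how much of the outsourced result you spell out, and that part of your argument is sound.

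One point, however, does not survive scrutiny: the sign in your final sentence. Your (correct) computation gives $E[R(Y)]-E[R(X)]=E[R'(V)]\,[E(Y)-E(X)]$, hence $\mathcal{E}_{f}(X)=E[R(X)]=E[R(Y)]-E[R'(V)]\,[E(Y)-E(X)]$, with a minus sign, whereas the proposition as printed asserts a plus sign. The minus sign is the right one: $R'(v)=\tfrac12 F(v)\ge 0$, so $R$ is increasing and $X\le_{st}Y$ forces $E[R(X)]\le E[R(Y)]$, which is incompatible with adding a positive quantity to $E[R(Y)]$; the printed ``$+$'' is evidently a slip in the statement. As written, though, your claim that ``rearranging then gives the asserted representation'' is not true of the displayed formula, so you should either flag the discrepancy explicitly or state the corrected identity rather than paper over it. A smaller remark: taken literally, $R(x)=-\frac12\int_x^{\infty}F(u)\,du=-\infty$ for every $x$, since $F(u)\to 1$; your Step 1 silently reads the upper limit as $\sup S_{X}<\infty$ (bounded support, the only case in which $\mathcal{E}_{f}(X)$ itself is finite), and since you claim the hypothesis ``$R(x)<\infty$'' is what legitimizes this step, you should say that this is the reading under which $R$, and the proposition, make sense.
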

\begin{proof}
The failure extropy can be expressed as
$\mathcal{E}_{f}(X)=E(R(X)).$ Now, the rest of
the proof follows from \cite{di1999probabilistic}. Thus, it is omitted.
\end{proof}

In this part of the section, we introduce failure extropy of bivariate random vector. Consider a bivariate random vector $(X,Y)$ with joint
pdf $k$ and cdf $K$. The supports of $X$ and $Y$ are denoted by $S_{X}$ and $S_{Y}$, respectively. Further, assume
that the marginal pdfs and cdfs of $X$ and $Y$
are denoted by $f,g$ and $F,G$, respectively. The
bivariate failure extropy of $(X,Y)$ is defined
as
\begin{eqnarray}\label{eq2.2}
\mathcal{E}_{f}(X,Y)=\frac{1}{4}\int_{0}^{\sup S_{X}}\int_{0}^{\sup S_{Y}}K^{2}(x,y)dxdy.
\end{eqnarray}
Let $X$ and $Y$ be independent, that is,
$K(x,y)=F(x)G(y)$. Utilizing this in
(\ref{eq2.2}), we get
\begin{eqnarray}\label{eq2.3}
\mathcal{E}_{f}(X,Y)=\mathcal{E}_{f}(X)\mathcal{E}_{f}(Y).
\end{eqnarray}
This relation reveals that when $X$ and $Y$ are
independent, the bivariate failure extropy
has the additive property like the
Shannon entropy. Now, we show that the bivariate failure extropy is not invariant under nonsingular transformations. Note that transformations play an important role to transform
a distribution into another. So, it is always of interest to see the form of the bivariate failure extropy
of the transformed random variables. Here we have studied the form of the bivariate failure extropy
under the transformations $X_{i}\rightarrow \psi(X_{i})$,~i=1,2.
\begin{proposition}\label{prop2.6}
	Let $(Y_{1},Y_{2})$ be a nonnegative and absolutely continuous random vector. Further, let $Y_{i}=\psi(X_{i}),~i=1,2$ be one-to-one transformations such that $\psi(x_{i})$'s are differentiable. Then,
	\begin{eqnarray}
	\mathcal{E}_{f}(Y_1,Y_2)=\frac{1}{4}\int_{0}^{\infty}\int_{0}^{\infty}K^{2}(x_1,x_2)|J|dx_1dx_2,
	\end{eqnarray}
	where $J=\frac{\partial}{\partial x_1}\psi_{1}(x_{1})\frac{\partial}{\partial x_2}\psi_{2}(x_{2})$ is the Jacobian of the transformations.
\end{proposition}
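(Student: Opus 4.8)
The plan is to lift the one-dimensional argument of Theorem~\ref{th2.1} to the double integral appearing in the definition~(\ref{eq2.2}) of the bivariate failure extropy. Write $K_Y$ for the joint cdf of $(Y_1,Y_2)$, so that $\mathcal{E}_f(Y_1,Y_2)=\frac14\int_0^{\sup S_{Y_1}}\int_0^{\sup S_{Y_2}}K_Y^2(y_1,y_2)\,dy_1\,dy_2$. The first step is to express $K_Y$ through the joint cdf $K$ of $(X_1,X_2)$. Treating the clean (and presumably intended) case in which each $\psi_i$ is strictly increasing, the event $\{\psi_i(X_i)\le y_i\}$ coincides with $\{X_i\le\psi_i^{-1}(y_i)\}$, hence $K_Y(y_1,y_2)=K\big(\psi_1^{-1}(y_1),\psi_2^{-1}(y_2)\big)$.

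Next I would apply the bivariate change of variables $x_i=\psi_i^{-1}(y_i)$, i.e.\ $y_i=\psi_i(x_i)$. Because the map acts coordinatewise, the Jacobian of $(y_1,y_2)\mapsto(x_1,x_2)$ factorizes as $\psi_1'(x_1)\,\psi_2'(x_2)$, which is precisely $J$; strict monotonicity makes each factor positive, so this equals $|J|$, while the limits $0$ and $\sup S_{Y_i}$ pull back to $0$ and $\sup S_{X_i}$. Substituting gives $\mathcal{E}_f(Y_1,Y_2)=\frac14\int_0^{\infty}\int_0^{\infty}K^2(x_1,x_2)\,|J|\,dx_1\,dx_2$, which is the assertion. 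As a consistency check, if $X_1$ and $X_2$ are independent then $K$ factors, the double integral splits, and (\ref{eq2.3}) combined with Theorem~\ref{th2.1} reproduces the formula.

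For the other monotonicity patterns (one or both $\psi_i$ strictly decreasing) the same substitution is used, but two bookkeeping items need care: for each decreasing coordinate the orientation of the integration limits is reversed and the corresponding $\psi_i'$ is negative, and these two sign changes cancel so that $|J|$ again appears in the integrand; simultaneously $K_Y$ must be rewritten using the relevant one-sided (co-survival) distribution function of $(X_1,X_2)$ in those coordinates rather than $K$ itself. I expect the genuine obstacle to be exactly this sign/limit bookkeeping, together with justifying the multivariate change-of-variables step (measurability and almost-everywhere differentiability of $\psi_i^{-1}$, which follow from the stated differentiability and strict monotonicity of $\psi_i$), rather than anything conceptually deep.
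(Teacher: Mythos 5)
Your change-of-variables argument is exactly the ``straightforward'' computation that the paper omits for Proposition~\ref{prop2.6}, and it is correct in the strictly increasing case: write the joint cdf of $(Y_1,Y_2)$ as $K\big(\psi_1^{-1}(y_1),\psi_2^{-1}(y_2)\big)$ and substitute $y_i=\psi_i(x_i)$, picking up the factor $|J|=\psi_1'(x_1)\psi_2'(x_2)$. Your additional observation that for decreasing $\psi_i$ the joint cdf of $(Y_1,Y_2)$ is no longer $K$ composed with the inverses (so the displayed formula holds verbatim only for increasing transformations, with survival-type modifications otherwise) is a sound refinement of a point the paper's statement glosses over.
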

\begin{proof}
	The proof is straightforward. Thus, it is omitted.
\end{proof}

Consider the conditional random variable $X|Y$ with cdf $K_{X|Y}(x|y)=\frac{K(x,y)}{G(y)}.$ Then, the conditional failure extropy is given by
\begin{eqnarray}\label{eq2.7}
\mathcal{E}_{f}(X|Y)=-\frac{1}{2}\int_{0}^{\infty}K^{2}_{X|Y}(x|y)dx.
\end{eqnarray}
Note that when $X$ and $Y$ are independent, then $K_{X|Y}(x|y)=F(x).$ Thus, clearly $$\mathcal{E}_{f}(X|Y)=\mathcal{E}_{f}(X).$$

\section{Dynamic failure extropy\setcounter{equation}{0}}
In order to illustrate the effect of the age t on the uncertainty in random variables, dynamic
information measures play a useful role in reliability.  This section introduces dynamic version of the
failure extropy. Suppose that a system is working. It is pre-planned that the system will be inspected at time $t>0$. At the inspection time,  the system is down. Thus, uncertainty relies in the inactivity (past) time $(t-X|X<t).$ For the random variable $X$, the dynamic (past)
failure extropy is defined as (see \cite{kundu2020cumulative} and \cite{nair2020dynamic})
\begin{eqnarray}\label{eq3.1}
\mathcal{E}_{f}(X;t)=-\frac{1}{2}\int_{0}^{t}\left(\frac{F(x)}{F(t)}\right)^{2}dx,~t>0,~F(t)>0.
\end{eqnarray}
Under the condition that at time $t$, the system is down, $\mathcal{E}_{f}(X;t)$ quantifies the uncertainty about its past life. Clearly, $\mathcal{E}_{f}(X;0)=0$ and
$\mathcal{E}_{f}(X;\infty)=\mathcal{E}_{f}(X)$.
One can easily check that like the failure extropy, the dynamic failure
extropy (DFE) takes negative values. The DFE
of $X$ is equal to the failure extropy of the inactivity time.  Now, we consider some distributions and obtain closed form expressions of the DFE.
\begin{example}~\\
	\vspace{-.5cm}
\begin{itemize}
\item Let $X$ follow uniform distribution in
the interval $(0,b)$. Thus, $F(x)=x/b$, $0<x<b$.
Then, $\mathcal{E}_{f}(X;t)=-t/6$, for $0<t<b$.
\item Let $X$ have type-III extreme value
distribution with cdf
$F(x)=\exp\{\alpha(x-\beta)\}$, $x<\beta$ and
$\alpha>0$. Then,
$\mathcal{E}_{f}(X;t)=-(\exp\{2\alpha
t\}-1)/(4\alpha\exp\{2\alpha t\}),$ for $t<\beta.$
\item Consider a Pareto random variable $X$ with
cdf $F(x)=1-(b/x)^{a},~x>b>0,~a>0$. We obtain
$\mathcal{E}_{f}(X;t)=-\frac{1}{2(1-(b/t)^{a})^{2}}[(t-b)+\frac{2b^{a}}{a-1}(\frac{1}{t^{a-1}}
-\frac{1}{b^{a-1}})-\frac{b^{2a}}{2a-1}(\frac{1}{t^{2a-1}}-\frac{1}{b^{2a-1}})],$ for $a\ne 1,\frac{1}{2}$ and $t>b.$
\end{itemize}
\end{example}

We also consider exponential distribution with cdf $F(x)=1-e^{-\lambda x},~x>0,~\lambda>0$ to compute the DFE. This is given by
\begin{eqnarray}
\mathcal{E}_{f}(X;t)=-\frac{1}{2(1-e^{-\lambda t})^{2}}\left[t+\frac{2e^{-\lambda t}}{\lambda}-\frac{e^{-2\lambda t}}{2\lambda}-\frac{2}{\lambda}+\frac{1}{2\lambda}\right].
\end{eqnarray}
For the purpose of visualization, we have depicted the DFE of exponential distribution for several values of the parameter $\lambda$ in Figure $1$.
\begin{figure}[h]
\begin{center}
\includegraphics[height=3.8in]{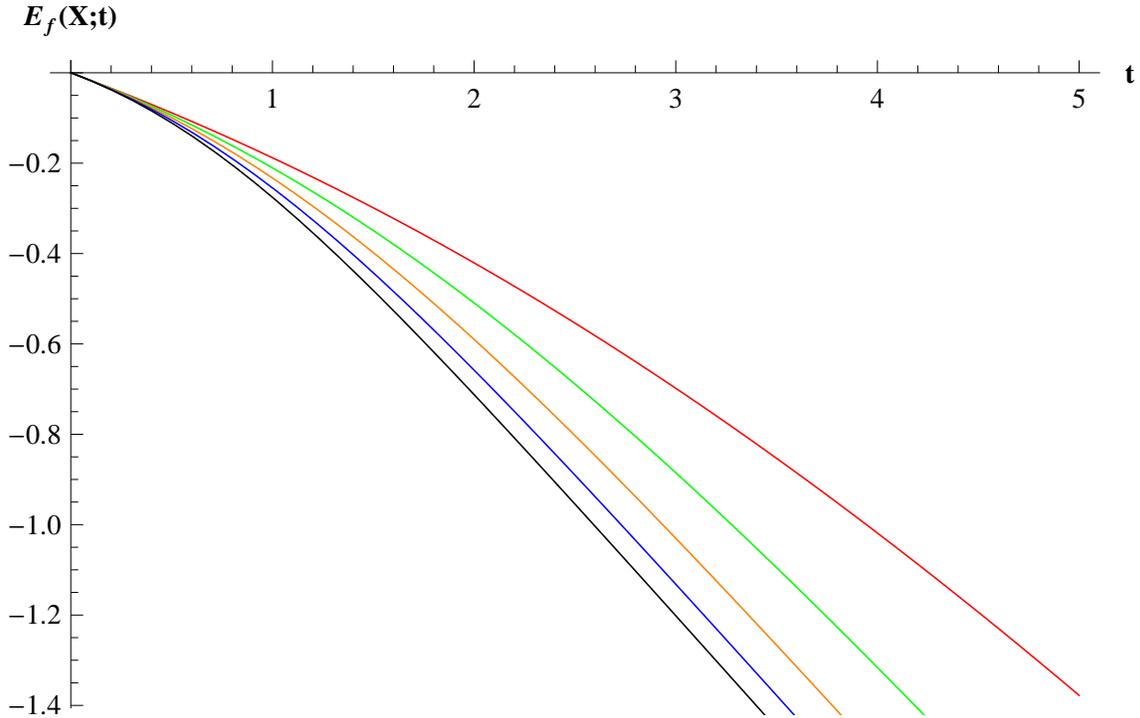}
\end{center}
\caption{Plots of the dynamic failure extropy of the exponential distribution for various values of $\lambda.$ The values of $\lambda$ are taken as $0.5,~1,~1.5,~2$ and $2.5$ from above..}
\end{figure}
Similarly to Proposition \ref{prop2.1}, here, we obtain lower and upper bounds of the dynamic failure extropy. The proof is omitted.
\begin{proposition}
Let $X$ be a nonnegative and absolutely continuous random variable with cdf $F.$ Further, denote the mean inactivity time of $X$ as $m_{I}(X;t)=E[t-X|X<t]=\int_{0}^{t}\frac{F(x)}{F(t)}dx.$ Then,
\begin{itemize}
	\item $\mathcal{E}_{f}(X;t)\ge -\frac{1}{2F^{2}(t)}\int_{0}^{t}F(x)dx;$
	\item $\mathcal{E}_{f}(X;t)\le -\frac{1}{2t}m_{I}^{2}(X;t).$
\end{itemize}
\end{proposition}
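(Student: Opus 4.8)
The plan is to mirror the proof of Proposition \ref{prop2.1} but carry the extra factor $1/F^2(t)$ through the two elementary estimates. Recall from \eqref{eq3.1} that
\begin{eqnarray*}
\mathcal{E}_{f}(X;t)=-\frac{1}{2F^{2}(t)}\int_{0}^{t}F^{2}(x)\,dx,
\end{eqnarray*}
so the entire problem reduces to bounding $\int_{0}^{t}F^{2}(x)\,dx$ from above and below by quantities expressed through $\int_{0}^{t}F(x)\,dx$ (equivalently through $m_{I}(X;t)$, since $m_{I}(X;t)=\frac{1}{F(t)}\int_{0}^{t}F(x)\,dx$).

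For the first (lower) bound, I would use that $F$ is a cdf, hence $0\le F(x)\le 1$ on $[0,t]$, which gives $F^{2}(x)\le F(x)$ pointwise. Integrating over $[0,t]$ yields $\int_{0}^{t}F^{2}(x)\,dx\le\int_{0}^{t}F(x)\,dx$; multiplying by $-\tfrac{1}{2F^{2}(t)}<0$ reverses the inequality and produces exactly $\mathcal{E}_{f}(X;t)\ge-\frac{1}{2F^{2}(t)}\int_{0}^{t}F(x)\,dx$. For the second (upper) bound, I would apply Jensen's inequality for integrals to the convex function $\varphi(z)=z^{2}$ with respect to the uniform probability measure $\tfrac{1}{t}\,dx$ on $[0,t]$: namely $\frac{1}{t}\int_{0}^{t}F^{2}(x)\,dx\ge\left(\frac{1}{t}\int_{0}^{t}F(x)\,dx\right)^{2}$, so that $\int_{0}^{t}F^{2}(x)\,dx\ge\frac{1}{t}\left(\int_{0}^{t}F(x)\,dx\right)^{2}=\frac{F^{2}(t)}{t}\,m_{I}^{2}(X;t)$. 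Substituting into the displayed expression for $\mathcal{E}_{f}(X;t)$ and again flipping the inequality because of the negative prefactor gives $\mathcal{E}_{f}(X;t)\le-\frac{1}{2t}m_{I}^{2}(X;t)$, and the $F^{2}(t)$ factors cancel cleanly.

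There is essentially no obstacle here — the result is a direct transcription of Proposition \ref{prop2.1} with $u$ replaced by $t$ and an extra harmless positive factor $1/F^{2}(t)$, and indeed the paper itself says ``the proof is omitted.'' The only point worth a moment's care is the cancellation of $F^{2}(t)$ in the upper bound: one must use $m_{I}(X;t)=\frac{1}{F(t)}\int_{0}^{t}F(x)\,dx$ so that $\left(\int_{0}^{t}F(x)\,dx\right)^{2}=F^{2}(t)\,m_{I}^{2}(X;t)$, after which the $F^{2}(t)$ in numerator and denominator disappear and the clean form $-\frac{1}{2t}m_{I}^{2}(X;t)$ emerges. One should also note that the hypothesis $F(t)>0$ is implicit (it is needed already for \eqref{eq3.1} to make sense), so no extra assumptions are required.
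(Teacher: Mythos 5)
Your proposal is correct and follows exactly the route the paper intends: it mirrors Proposition \ref{prop2.1}, using $F^{2}(x)\le F(x)$ for the lower bound and Jensen's inequality (with the uniform measure on $[0,t]$ and the cancellation of $F^{2}(t)$ via $m_{I}(X;t)=\frac{1}{F(t)}\int_{0}^{t}F(x)\,dx$) for the upper bound, which is precisely why the paper omits the proof. No gaps.
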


In the following theorem, we study the effect of monotone transformations on the DFE. The proof is similar to Theorem \ref{th2.1} and thus, it is omitted.
\begin{theorem}\label{th3.1}
	Let $X$ be a nonnegative absolutely continuous
	random variable with cdf $F$. Further, let $Y=\psi(X)$ be strictly
	monotone and differentiable. Denote the dynamic survival extropy
	of $Y$ by $\mathcal{E}_{f}(Y;t).$ Then,
	\begin{eqnarray*}
		\mathcal{E}_{f}(Y;t)=\begin{cases}
			-\frac{1}{2}\int_{\psi^{-1}(0)}^{\psi^{-1}(t)}\psi'(x)\frac{F^{2}(x)}{F^{2}(\psi^{-1}(t))}dx, &
			if~\psi~
			\mbox{is strictly increasing;} \\
			-\frac{1}{2}\int_{\psi^{-1}(t)}^{\psi^{-1}(0)}|\psi'(x)|\frac{F^{2}(x)}{F^{2}(\psi^{-1}(t))}dx,
			& if~\psi~ \mbox{is strictly decreasing.}
		\end{cases}
	\end{eqnarray*}
\end{theorem}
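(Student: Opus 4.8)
The plan is to follow the same route as the proof of Theorem~\ref{th2.1}, adapted to the normalized cdf that appears in the definition (\ref{eq3.1}) of the dynamic failure extropy.

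\emph{Strictly increasing case.} Since $\psi$ is strictly increasing, the cdf of $Y=\psi(X)$ is $G(y)=P(\psi(X)\le y)=P(X\le \psi^{-1}(y))=F(\psi^{-1}(y))$; in particular the requirement $G(t)>0$ is the same as $F(\psi^{-1}(t))>0$. Substituting this into (\ref{eq3.1}) gives
$$\mathcal{E}_{f}(Y;t)=-\frac{1}{2}\int_{0}^{t}\left(\frac{G(y)}{G(t)}\right)^{2}dy=-\frac{1}{2}\int_{0}^{t}\frac{F^{2}(\psi^{-1}(y))}{F^{2}(\psi^{-1}(t))}\,dy.$$
Then I would apply the change of variable $x=\psi^{-1}(y)$, i.e. $y=\psi(x)$ and $dy=\psi'(x)\,dx$, so that the limits $y=0$ and $y=t$ become $x=\psi^{-1}(0)$ and $x=\psi^{-1}(t)$. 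Because $\psi'(x)>0$ there is no sign issue and the orientation of the interval is preserved, which produces exactly the first branch.

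\emph{Strictly decreasing case.} The computation uses the same substitution, but now $\psi^{-1}$ is decreasing, so $x=\psi^{-1}(y)$ reverses the orientation of the interval of integration, while $\psi'(x)<0$ on the support. Collecting these two effects converts $\psi'(x)\,dx$ into $|\psi'(x)|\,dx$ and interchanges the limits to $\psi^{-1}(t)$ and $\psi^{-1}(0)$, yielding the second branch, exactly as in the decreasing part of Theorem~\ref{th2.1}.

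The only step that needs any care — and hence the main, and rather minor, obstacle — is the sign bookkeeping in the decreasing case: one must simultaneously track the relation between the cdf of $Y$ and $F$ at the transformed argument, the negativity of $\psi'$, and the flip of the integration limits, so that the normalizing denominator $F^{2}(\psi^{-1}(t))$ and the factor $|\psi'(x)|$ emerge in their stated form. Everything else is the routine change of variables already carried out for the static failure extropy, so the proof is indeed omitted.
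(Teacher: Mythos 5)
Your increasing case is correct and is exactly the argument the paper intends (its proof is omitted with a pointer to Theorem \ref{th2.1}): write $G(y)=F(\psi^{-1}(y))$, substitute into (\ref{eq3.1}), and change variables $x=\psi^{-1}(y)$. The gap is in the decreasing case, at precisely the point you dismiss as ``sign bookkeeping.'' When $\psi$ is strictly decreasing, the cdf of $Y=\psi(X)$ is not $F(\psi^{-1}(y))$ but $G(y)=P(X\ge \psi^{-1}(y))=\bar F(\psi^{-1}(y))$, so the same substitution does not yield the stated second branch; it yields
\begin{eqnarray*}
\mathcal{E}_{f}(Y;t)=-\frac{1}{2}\int_{\psi^{-1}(t)}^{\psi^{-1}(0)}|\psi'(x)|\,\frac{\bar F^{2}(x)}{\bar F^{2}(\psi^{-1}(t))}\,dx,
\end{eqnarray*}
with the survival function of $X$ in place of $F$. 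No amount of flipping limits or tracking the sign of $\psi'$ converts $\bar F$ into $F$, so your claim that the displayed second branch ``emerges'' is a step that fails. Concretely, take $X$ uniform on $(0,1)$ and $\psi(x)=1-x$: then $Y$ is again uniform, so $\mathcal{E}_{f}(Y;t)=-t/6$, which equals $-1/12$ at $t=1/2$, whereas the branch with $F^{2}(x)/F^{2}(\psi^{-1}(t))$ gives $-(1-(1-t)^{3})/(6(1-t)^{2})=-7/12$ there; the $\bar F$ version above does return $-t/6$. (The same defect is already present in the decreasing branch of Theorem \ref{th2.1} as printed, so you have inherited an error of the paper; a correct proof must either restrict to increasing $\psi$ or state the decreasing branch with $\bar F$, rather than assert that the bookkeeping works out as displayed.)
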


Below, we present the effect of the
transformation $Y=\alpha X+\beta$, where
$\alpha>0$ and $\beta\ge0$ on the DFE. It immediately follows from Theorem \ref{th3.1}.
\begin{proposition}\label{prop3.2}
If $Y=\alpha X+\beta$, where $\alpha>0$ and
$\beta\ge0$, then $\mathcal{E}_{f}(Y;t)=\alpha
\mathcal{E}_{f}(X;\frac{t-\beta}{\alpha})$.
\end{proposition}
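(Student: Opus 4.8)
The plan is to specialize Theorem~\ref{th3.1} to the affine map $\psi(x)=\alpha x+\beta$. Since $\alpha>0$, this $\psi$ is strictly increasing and differentiable on the support of $X$, with $\psi'(x)=\alpha$ and inverse $\psi^{-1}(y)=(y-\beta)/\alpha$. First I would substitute these data into the strictly-increasing branch of Theorem~\ref{th3.1}, which yields
\begin{eqnarray*}
\mathcal{E}_{f}(Y;t)=-\frac{\alpha}{2}\int_{-\beta/\alpha}^{(t-\beta)/\alpha}\frac{F^{2}(x)}{F^{2}((t-\beta)/\alpha)}\,dx.
\end{eqnarray*}

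Next, because $X$ is nonnegative, its cdf $F$ vanishes on $(-\infty,0)$; hence the part of the integral over $(-\beta/\alpha,0)$ (a nonempty interval precisely when $\beta>0$) contributes nothing, and the lower limit may be replaced by $0$. Writing $s=(t-\beta)/\alpha$ and pulling the constant $\alpha$ outside, the remaining expression is $\alpha\left(-\frac{1}{2}\int_{0}^{s}(F(x)/F(s))^{2}\,dx\right)$, which by (\ref{eq3.1}) is exactly $\alpha\,\mathcal{E}_{f}(X;s)$. This gives the stated identity $\mathcal{E}_{f}(Y;t)=\alpha\,\mathcal{E}_{f}(X;(t-\beta)/\alpha)$.

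Equivalently, one could bypass Theorem~\ref{th3.1} and argue directly from (\ref{eq3.1}): the cdf of $Y$ is $G(y)=F((y-\beta)/\alpha)$, so $\mathcal{E}_{f}(Y;t)=-\frac{1}{2}\int_{0}^{t}(G(x)/G(t))^{2}\,dx$, and the substitution $u=(x-\beta)/\alpha$ together with $G(x)=0$ for $x<\beta$ reproduces the same computation. No genuine obstacle is expected here; the only point that requires a word of care is the bookkeeping of the integration limits when $\beta>0$, which is resolved by the vanishing of $F$ on the negative half-line. One should also note that the identity implicitly presumes $F(t)>0$ (equivalently $t>\beta$ with $s$ in the support of $X$) so that both sides are well defined.
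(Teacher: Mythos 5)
Your proposal is correct and follows the same route as the paper, which simply derives this proposition as an immediate consequence of Theorem~\ref{th3.1} applied to the strictly increasing map $\psi(x)=\alpha x+\beta$. Your extra care with the lower integration limit (using that $F$ vanishes on the negative half-line when $\beta>0$) and the implicit requirement $t>\beta$ are sound refinements of the same argument.
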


\cite{abdul2019dynamic} proposed  ordering based on the dynamic survival extropy.  Here, we consider ordering in terms of the DFE given by (\ref{eq3.1}). One may refer to \cite{kundu2020cumulative} and \cite{nair2020dynamic} for similar order.
\begin{definition}
	Let $X$ and $Y$ be two nonnegative and absolutely continuous random variables with cdfs $F$ and $G$, respectively. Then, $X$ is said to be samller than $Y$ in DFE, abbreviated by $X\le_{dfe}Y$, if $\mathcal{E}_{f}(X;t)\le \mathcal{E}_{f}(Y;t)$, for all $t>0.$
\end{definition}
This definition of DFE ordering reveals that the uncertainty in $(X|X<t)$ about the predictability of the future time is smaller than that of $(Y|Y<t)$. Similar to the failure extropy order, the DFE order is also a partial order.
Below, we obtain a connection between the reversed hazard rate ordering and the DFE ordering (also see \cite{nair2020dynamic}).
\begin{theorem}\label{th3.2}
	Let $X$ and $Y$ be two nonnegative and absolutely continuous random variables with cdfs $F$ and $G$, respectively. Then, $$X\le_{rh}Y\Rightarrow X\le_{dfe}Y.$$
\end{theorem}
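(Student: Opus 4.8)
The plan is to reduce the conclusion to a pointwise comparison of the normalized distribution functions and then integrate. First I would rewrite the desired inequality $\mathcal{E}_f(X;t)\le\mathcal{E}_f(Y;t)$, using the definition (\ref{eq3.1}), in the equivalent form
\[
\int_0^t\Bigl(\frac{F(x)}{F(t)}\Bigr)^2\,dx\ \ge\ \int_0^t\Bigl(\frac{G(x)}{G(t)}\Bigr)^2\,dx,\qquad t>0,
\]
so that it suffices to prove the integrand on the left dominates the one on the right for every $x\in(0,t)$.

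Next I would bring in the hypothesis $X\le_{rh}Y$. Integrating the reversed hazard rate inequality $r_X(u)\le r_Y(u)$ over $[x,t]$ for $0<x\le t$ gives $\ln\{F(t)/F(x)\}\le\ln\{G(t)/G(x)\}$, and hence
\[
\frac{F(x)}{F(t)}\ \ge\ \frac{G(x)}{G(t)}\ \ge\ 0,\qquad 0<x\le t
\]
(equivalently, $\le_{rh}$ is characterized by the monotonicity of $G/F$; see \cite{shaked2007stochastic}). Squaring this chain — legitimate because all the quantities involved are nonnegative — and integrating over $(0,t)$ yields the displayed integral inequality above. Multiplying through by $-\tfrac12$ reverses the inequality and produces $\mathcal{E}_f(X;t)\le\mathcal{E}_f(Y;t)$ for every $t>0$, which is precisely $X\le_{dfe}Y$.

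There is no substantive obstacle here: the whole argument is a monotonicity comparison followed by an integration. The only point to handle with a little care is the passage from the reversed hazard rate order to the pointwise ratio bound $F(x)/F(t)\ge G(x)/G(t)$ for $x\le t$, and the observation that this holds exactly on the range of $t$ for which the dynamic failure extropy is defined, namely $F(t)>0$ (and likewise $G(t)>0$), so that no positivity hypotheses beyond those already implicit in (\ref{eq3.1}) are required.
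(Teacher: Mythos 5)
Your proof is correct, and it is exactly the standard argument that the paper itself omits (Theorem \ref{th3.2} is stated with the proof deferred to the cited reference): the reversed hazard rate order gives the pointwise bound $F(x)/F(t)\ge G(x)/G(t)\ge 0$ for $0<x\le t$, and squaring and integrating yields the DFE comparison. Your closing remark is the right care point; note that the ratio characterization of $\le_{rh}$ (monotonicity of $G/F$, equivalently $G(x)F(t)\le G(t)F(x)$ for $x\le t$) also covers any $x$ with $F(x)=0$, where the log-integration step alone would not literally apply.
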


Now, we consider an example, which is an application of  Theorem \ref{th3.2}.
\begin{example}
	Let $X_{1},\ldots,X_{n}$ be independent identical observations with reversed hazard rate $r$. Denote $X_{n:n}=\max\{X_{1},\ldots,X_{n}\}$ and its reversed hazard rate by $r_{n:n}(t)=nr(t).$ Then,  $X_{1}\le_{rh}X_{n:n}$. Thus, an application of Theorem \ref{th3.2} produces $\mathcal{E}_{f}(X_1;t)\le \mathcal{E}_{f}(X_{n:n};t).$
\end{example}
\begin{remark}
Let $\xi$ be continuous and strictly increasing function. Then, from \cite{nanda2001hazard}, we have
\begin{eqnarray}
X\le_{rh}Y\Rightarrow \xi(X)\le_{rh}\xi(Y)\Rightarrow \mathcal{E}_{f}(\xi(X);t)\le \mathcal{E}_{f}(\xi(Y);t)\Rightarrow   \xi(X)\le_{dfe} \xi(Y).
\end{eqnarray}
\end{remark}

Basically, transformations are used to transform one probability density function into another.
So, it is of interest to study if the uncertainty order (here $X\le_{dfe}Y$) is preserved under some
transformations. Next proposition provides sufficient conditions for preserving the ordering $X\le_{dfe}Y$
under increasing (affine) transformations. Next result characterizes DFE ordering through increasing transformation.
\begin{proposition}
	Let $X$ and $Y$ be two nonnegative and absolutely continuous random variables such that $X\le_{dfe}Y$. Let us take $Z_{1}=\alpha_{1}X+\beta_{1}$ and $Z_{2}=\alpha_{2}Y+\beta_{2}$ such that $0<\alpha_1\le \alpha_2$ and $0<\beta_1\le \beta_2.$ Then, $Z_{1}\le_{dfe}Z_{2}$, provided $\mathcal{E}_{f}(X;t)$ is decreasing with respect to $t~(>\beta_2)$.
\end{proposition}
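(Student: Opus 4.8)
The plan is to reduce the claim about $Z_1 \le_{dfe} Z_2$ to the hypothesis $X \le_{dfe} Y$ by means of Proposition~\ref{prop3.2}, which gives the exact effect of an affine transformation on the dynamic failure extropy. Writing $Z_1 = \alpha_1 X + \beta_1$ and $Z_2 = \alpha_2 Y + \beta_2$, Proposition~\ref{prop3.2} yields
\begin{eqnarray*}
\mathcal{E}_f(Z_1;t) = \alpha_1 \,\mathcal{E}_f\!\left(X;\tfrac{t-\beta_1}{\alpha_1}\right),
\qquad
\mathcal{E}_f(Z_2;t) = \alpha_2 \,\mathcal{E}_f\!\left(Y;\tfrac{t-\beta_2}{\alpha_2}\right),
\end{eqnarray*}
valid for $t > \beta_1$ and $t > \beta_2$ respectively; since $\beta_1 \le \beta_2$, it suffices to work on $t > \beta_2$. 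So the goal is to show $\alpha_1 \mathcal{E}_f\!\big(X;\tfrac{t-\beta_1}{\alpha_1}\big) \le \alpha_2 \mathcal{E}_f\!\big(Y;\tfrac{t-\beta_2}{\alpha_2}\big)$ for all $t > \beta_2$.

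First I would handle the two arguments of $\mathcal{E}_f(X;\cdot)$ and $\mathcal{E}_f(Y;\cdot)$. Set $s_1 = \tfrac{t-\beta_1}{\alpha_1}$ and $s_2 = \tfrac{t-\beta_2}{\alpha_2}$. From $\beta_1 \le \beta_2$ we get $t - \beta_1 \ge t - \beta_2 \ge 0$, and from $\alpha_1 \le \alpha_2$ we then get $s_1 = \tfrac{t-\beta_1}{\alpha_1} \ge \tfrac{t-\beta_2}{\alpha_2} = s_2 \ge 0$, with both positive once $t > \beta_2$. The chain of comparisons I would run is: by hypothesis $\mathcal{E}_f(X;s) \le \mathcal{E}_f(Y;s)$ for every $s > 0$, so in particular $\mathcal{E}_f(X;s_1) \le \mathcal{E}_f(Y;s_1)$; next, using that $\mathcal{E}_f(X;t)$ is decreasing in $t$ for $t > \beta_2$ (the stated proviso) together with $s_1 \ge s_2$, I get $\mathcal{E}_f(X;s_1) \le \mathcal{E}_f(X;s_2)$. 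Combining, $\mathcal{E}_f(X;s_1) \le \mathcal{E}_f(Y;s_1)$ and I still need to relate $\mathcal{E}_f(Y;s_1)$ to $\mathcal{E}_f(Y;s_2)$ — here I would again invoke monotonicity, this time of $\mathcal{E}_f(Y;\cdot)$; but the proviso as stated only asserts monotonicity of $\mathcal{E}_f(X;\cdot)$. The cleanest route is therefore: $\mathcal{E}_f(X;s_1) \le \mathcal{E}_f(X;s_2) \le \mathcal{E}_f(Y;s_2)$, using decreasingness of $\mathcal{E}_f(X;\cdot)$ for the first inequality and the hypothesis $X \le_{dfe} Y$ at the point $s_2$ for the second. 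This gives $\mathcal{E}_f(X;s_1) \le \mathcal{E}_f(Y;s_2)$.

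It remains to bring in the multiplicative constants. Since the dynamic failure extropy is always nonpositive, $\mathcal{E}_f(Y;s_2) \le 0$, and from $0 < \alpha_1 \le \alpha_2$ we obtain $\alpha_2 \mathcal{E}_f(Y;s_2) \le \alpha_1 \mathcal{E}_f(Y;s_2)$ (multiplying a nonpositive number by the larger constant makes it smaller or equal). Chaining this with the previous step and using $\alpha_1 > 0$:
\begin{eqnarray*}
\mathcal{E}_f(Z_1;t) = \alpha_1 \mathcal{E}_f(X;s_1) \le \alpha_1 \mathcal{E}_f(Y;s_2) \ge \alpha_2 \mathcal{E}_f(Y;s_2) = \mathcal{E}_f(Z_2;t),
\end{eqnarray*}
so I would instead order the two multiplicative steps so that signs line up: $\alpha_1 \mathcal{E}_f(X;s_1) \le \alpha_1 \mathcal{E}_f(X;s_2)$ needs $\mathcal{E}_f(X;s_1)\le \mathcal{E}_f(X;s_2)$, true; then $\alpha_1 \mathcal{E}_f(X;s_2) \le \alpha_2 \mathcal{E}_f(X;s_2)$? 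No — that fails for nonpositive values. The safe bookkeeping is: first apply $X\le_{dfe}Y$ to pass from $\mathcal{E}_f(X;s_2)$ to $\mathcal{E}_f(Y;s_2)$ while the constant is $\alpha_2$ (monotone under $\le$), then scale down $\alpha_2 \to \alpha_1$ is the wrong direction; hence the correct arrangement is $\alpha_1\mathcal{E}_f(X;s_1) \le \alpha_2 \mathcal{E}_f(X;s_1) \le \alpha_2\mathcal{E}_f(X;s_2) \le \alpha_2\mathcal{E}_f(Y;s_2) = \mathcal{E}_f(Z_2;t)$, where step one uses $\mathcal{E}_f(X;s_1)\le 0$ and $\alpha_1\le\alpha_2$ (so multiplying by the larger constant decreases), step two uses decreasingness of $\mathcal{E}_f(X;\cdot)$ with $s_1\ge s_2$, and step three is the hypothesis at $s_2$. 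Thus the main obstacle is purely the sign/direction bookkeeping among the three inequalities — getting each multiplication and each monotonicity step to point the same way — since the dynamic failure extropy is negative, and I would be careful to arrange the chain exactly as above. I would finish by noting $Z_1 \le_{dfe} Z_2$ by definition, completing the proof.
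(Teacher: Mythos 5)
Your reduction via Proposition \ref{prop3.2} and the observation that $s_1=\frac{t-\beta_1}{\alpha_1}\ge s_2=\frac{t-\beta_2}{\alpha_2}>0$ for $t>\beta_2$ are fine, and this is the route the paper itself points to (its proof is omitted, citing only Proposition \ref{prop3.2}). But your final chain breaks at its first link: the inequality $\alpha_1\mathcal{E}_f(X;s_1)\le\alpha_2\mathcal{E}_f(X;s_1)$ is false, because $\mathcal{E}_f(X;s_1)\le 0$ and $\alpha_1\le\alpha_2$ give exactly the reverse, $\alpha_2\mathcal{E}_f(X;s_1)\le\alpha_1\mathcal{E}_f(X;s_1)$. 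The justification you attach to that step ("multiplying a nonpositive number by the larger constant decreases") proves the opposite direction, and it is the same trap you correctly flagged one line earlier for the point $s_2$; replacing $s_2$ by $s_1$ does not change the sign problem.

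Moreover, this is not a bookkeeping issue that a different ordering of the three steps can fix. From $\mathcal{E}_f(X;s_1)\le\mathcal{E}_f(X;s_2)\le\mathcal{E}_f(Y;s_2)\le 0$ and $0<\alpha_1\le\alpha_2$ one simply cannot conclude $\alpha_1\mathcal{E}_f(X;s_1)\le\alpha_2\mathcal{E}_f(Y;s_2)$: take both extropy values equal to $-1$, $\alpha_1=1$, $\alpha_2=2$. What does close the argument is the stronger property that $\mathcal{E}_f(X;t)/t$ is decreasing in $t$ (equivalently, $-\mathcal{E}_f(X;t)$ is star-shaped); then
$\alpha_1\mathcal{E}_f(X;s_1)=(t-\beta_1)\frac{\mathcal{E}_f(X;s_1)}{s_1}\le(t-\beta_1)\frac{\mathcal{E}_f(X;s_2)}{s_2}\le(t-\beta_2)\frac{\mathcal{E}_f(X;s_2)}{s_2}=\alpha_2\mathcal{E}_f(X;s_2)\le\alpha_2\mathcal{E}_f(Y;s_2)$,
using $\frac{\mathcal{E}_f(X;s_2)}{s_2}\le 0$ and $X\le_{dfe}Y$ at $s_2$. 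Mere decreasingness of $\mathcal{E}_f(X;t)$ does not yield the key scaling inequality $\mathcal{E}_f(X;s)\le\alpha\,\mathcal{E}_f(X;s/\alpha)$ for $\alpha\ge 1$ (note the uniform law, with $\mathcal{E}_f(X;t)/t\equiv-1/6$, is exactly the equality case), so under the proviso as stated your chain has a genuinely false step and the elementary pointwise argument does not go through without an additional hypothesis of this ratio/star-shaped type.
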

\begin{proof}
	The proof follows under the assumptions made and Proposition \ref{prop3.2}. Thus, it is omitted.
\end{proof}

We propose a nonparametric class of distributions.
\begin{definition}
A random variable $X$ with cdf $F$ belongs to the
class of decreasing dynamic failure
extropy (DDFE) if
$\mathcal{E}_{f}(X;t)$ is decreasing
with respect to $t.$
\end{definition}
Clearly, uniform distribution belongs to this class. Next, we find conditions, under which the reversed hazard rate is decreasing.
Note that Equation (\ref{eq3.1}) can be
alternatively expressed as
\begin{eqnarray}\label{eq3.2}
[F(t)]^{2}\mathcal{E}_{f}(X;t)=-\frac{1}{2}\int_{0}^{t}F^{2}(x)dx.
\end{eqnarray}
On differentiating (\ref{eq3.2}) with respect to
$t$, we obtain
\begin{eqnarray}\label{eq3.3}
\frac{d}{dt}\mathcal{E}_{f}(X;t)+2r(t)\mathcal{E}_{f}(X;t)+\frac{1}{2}=0,
\end{eqnarray}
where $r(t)=f(t)/F(t)$ is known as the reversed
hazard rate of $X$. Further, differentiating (\ref{eq3.3}) with respect to $t$, we get
\begin{eqnarray}\label{eq3.5}
\frac{d^2}{dt^2}\mathcal{E}_{f}(X;t)+2r'(t)\mathcal{E}_{f}(X;t)+2r(t)\frac{d}{dt}\mathcal{E}_{f}(X;t)=0.
\end{eqnarray}
Thus, we have the following result.
\begin{proposition}\label{prop3.4}
Let $X$ be a nonnegative and absolutely continuous random variable. If $\mathcal{E}_{f}(X;t)$ is decreasing and concave, then $X$ has decreasing reversed hazard rate function.
\end{proposition}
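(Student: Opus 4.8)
The plan is to extract the conclusion directly from the second-order identity (\ref{eq3.5}) that has already been derived above, by carefully tracking signs. First I would record the elementary sign facts that will be needed: (a) the dynamic failure extropy is strictly negative, $\mathcal{E}_{f}(X;t)<0$ for every $t>0$ with $F(t)>0$, which is immediate from (\ref{eq3.1}) because $F$ is continuous with $F(t)>0$, so the integrand $(F(x)/F(t))^{2}$ is positive on a subinterval of $[0,t]$ of positive length; (b) the reversed hazard rate is nonnegative, $r(t)=f(t)/F(t)\ge 0$; and (c) under the hypotheses of the proposition, $\frac{d}{dt}\mathcal{E}_{f}(X;t)\le 0$ (since $\mathcal{E}_{f}(X;t)$ is decreasing) and $\frac{d^{2}}{dt^{2}}\mathcal{E}_{f}(X;t)\le 0$ (since it is concave).

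Next I would solve (\ref{eq3.5}) for $r'(t)$, obtaining
\[
2r'(t)\,\mathcal{E}_{f}(X;t)= -\frac{d^{2}}{dt^{2}}\mathcal{E}_{f}(X;t)-2r(t)\frac{d}{dt}\mathcal{E}_{f}(X;t).
\]
On the right-hand side, $-\frac{d^{2}}{dt^{2}}\mathcal{E}_{f}(X;t)\ge 0$ by concavity, while $-2r(t)\frac{d}{dt}\mathcal{E}_{f}(X;t)\ge 0$ because it is the product of the nonnegative factor $2r(t)$ and the nonpositive factor $\frac{d}{dt}\mathcal{E}_{f}(X;t)$ together with an overall minus sign. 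Hence $2r'(t)\,\mathcal{E}_{f}(X;t)\ge 0$. Dividing through by $2\mathcal{E}_{f}(X;t)$, which is strictly negative, reverses the inequality and gives $r'(t)\le 0$, i.e.\ $r$ is decreasing, as claimed.

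I do not anticipate a substantive obstacle here: the whole argument is the bookkeeping of signs in (\ref{eq3.5}), and the only point deserving a word of care is the regularity used to write (\ref{eq3.3}) and (\ref{eq3.5}). Concavity of $\mathcal{E}_{f}(X;t)$ guarantees that $\frac{d^{2}}{dt^{2}}\mathcal{E}_{f}(X;t)$ exists almost everywhere, and (\ref{eq3.3}) shows $r$ inherits the smoothness of $\mathcal{E}_{f}(X;t)$, so the displayed manipulation is valid wherever the second derivative exists; since then $r'(t)\le 0$ a.e.\ and $r$ is (locally absolutely) continuous, monotonicity of $r$ follows everywhere. If a fully elementary exposition is preferred, one can avoid second derivatives altogether by noting from (\ref{eq3.3}) that $-2r(t)=\bigl(\frac{d}{dt}\mathcal{E}_{f}(X;t)+\tfrac12\bigr)/\mathcal{E}_{f}(X;t)$ and arguing directly that the right-hand side is nondecreasing in $t$ under hypotheses (a) and (c); I would present whichever version fits the surrounding level of rigor.
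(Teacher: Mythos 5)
Your argument is exactly the one the paper intends: it derives (\ref{eq3.3}) and (\ref{eq3.5}) immediately before the proposition and concludes by the same sign bookkeeping, namely $2r'(t)\mathcal{E}_{f}(X;t)=-\frac{d^{2}}{dt^{2}}\mathcal{E}_{f}(X;t)-2r(t)\frac{d}{dt}\mathcal{E}_{f}(X;t)\ge 0$ together with $\mathcal{E}_{f}(X;t)<0$, giving $r'(t)\le 0$. Your proposal is correct and matches the paper's approach, with the added (welcome) care about where the second derivative exists.
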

The following proposition provides an upper bound of the reversed hazard rate function in terms of the DFE, when $X$ belongs to the DDFE class.
\begin{proposition}
Let $X$ be an absolutely continuous and
nonnegative random variable. Further, let
$\mathcal{E}_{f}(X;t)$ be decreasing with respect
to $t$. Then, $r(t) \le -\frac{1}{4\mathcal{E}_{f}(X;t)}$.
\end{proposition}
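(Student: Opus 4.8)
The plan is to read off the claim directly from the first-order differential identity (\ref{eq3.3}), namely
\[
\frac{d}{dt}\mathcal{E}_{f}(X;t)+2r(t)\mathcal{E}_{f}(X;t)+\frac{1}{2}=0,
\]
which holds for every $t>0$ with $F(t)>0$. The hypothesis that $\mathcal{E}_{f}(X;t)$ is decreasing in $t$ gives $\frac{d}{dt}\mathcal{E}_{f}(X;t)\le 0$, so rearranging the identity yields
\[
2r(t)\mathcal{E}_{f}(X;t)+\frac{1}{2}=-\frac{d}{dt}\mathcal{E}_{f}(X;t)\ge 0,
\]
that is, $2r(t)\mathcal{E}_{f}(X;t)\ge-\tfrac12$.

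Next I would use the fact, already noted in the text, that the dynamic failure extropy is strictly negative, i.e. $\mathcal{E}_{f}(X;t)<0$ for the relevant range of $t$. Dividing the inequality $2r(t)\mathcal{E}_{f}(X;t)\ge-\tfrac12$ by the negative quantity $2\mathcal{E}_{f}(X;t)$ reverses the direction of the inequality and produces
\[
r(t)\le \frac{-1/2}{2\mathcal{E}_{f}(X;t)}=-\frac{1}{4\mathcal{E}_{f}(X;t)},
\]
which is exactly the asserted bound.

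There is essentially no hard step here; the only point requiring care is the strict negativity of $\mathcal{E}_{f}(X;t)$, which legitimizes the division and the sign reversal. This follows because $\int_{0}^{t}\big(F(x)/F(t)\big)^{2}dx>0$ whenever $F(t)>0$ (the integrand is positive on a set of positive Lebesgue measure), so the bound is valid precisely on the set of $t$ for which $r(t)$ is defined. I would phrase the final proof in two or three lines, citing (\ref{eq3.3}) and the monotonicity and sign of $\mathcal{E}_{f}(X;t)$.
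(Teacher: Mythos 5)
Your argument is correct and is exactly the paper's intended proof: the paper simply states that the bound "follows from (\ref{eq3.3})," and your rearrangement of that identity using the monotonicity hypothesis and the strict negativity of $\mathcal{E}_{f}(X;t)$ supplies precisely the missing details.
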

\begin{proof}
The proof follows from (\ref{eq3.3}).
\end{proof}

It is always of interest to deal with the problem of characterizing probability distributions. The
general characterization problem is to obtain conditions under which the DFE uniquely determines
the distribution function. Next theorem shows that the DFE determines the distribution function uniquely. See \cite{nair2020dynamic} for the proof.
\begin{theorem}
	Let $X$ have the cdf $F$ and reversed hazard rate function $r.$ Then, $\mathcal{E}_{f}(X;t)$ uniquely determines the cdf.
\end{theorem}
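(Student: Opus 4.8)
The plan is to leverage the first‑order differential relation (\ref{eq3.3}) that ties the DFE to the reversed hazard rate $r$, together with the classical fact that $r$ determines the cdf of a nonnegative random variable. The first thing I would record is that $\mathcal{E}_{f}(X;t)$ is \emph{strictly} negative at every $t$ with $F(t)>0$: since $F$ is continuous and $F(t)>0$, the integrand in (\ref{eq3.2}) is positive on a subinterval, so $\int_{0}^{t}F^{2}(x)\,dx>0$ and hence $\mathcal{E}_{f}(X;t)=-\tfrac{1}{2F^{2}(t)}\int_{0}^{t}F^{2}(x)\,dx<0$. This strict negativity is exactly what makes the division in the next step legitimate.

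Next I would rearrange (\ref{eq3.3}). Because $\mathcal{E}_{f}(X;t)\neq 0$ for $t>0$ in the support, we may solve for the reversed hazard rate,
\[
r(t)=-\frac{1}{2\mathcal{E}_{f}(X;t)}\left(\frac{d}{dt}\mathcal{E}_{f}(X;t)+\frac12\right),
\]
valid at every $t$ where $f$ exists (hence almost everywhere). Thus the function $t\mapsto\mathcal{E}_{f}(X;t)$ determines $r$ uniquely. To finish, I would recover $F$ from $r$ in the standard way: since $r(t)=\frac{d}{dt}\log F(t)$ on the interior of the support, integrating from $t$ up to $u:=\sup S_{X}$ and using $F(u^{-})=1$ gives $\log F(t)=-\int_{t}^{u}r(s)\,ds$, i.e.
\[
F(t)=\exp\left(-\int_{t}^{u}r(s)\,ds\right),
\]
so $F$ is uniquely determined by $r$, and hence by the DFE. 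Equivalently, if two cdfs share the same DFE then, by the displayed formula for $r$, they share the same reversed hazard rate and therefore coincide, which is the asserted uniqueness.

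The routine ingredients here — the differentiations producing (\ref{eq3.3}) — are already in hand. The only points genuinely needing care are (i) the strict negativity of $\mathcal{E}_{f}(X;t)$, which I would dispose of first as above, and (ii) the validity of the reversed‑hazard inversion near the left endpoint of the support, where $r$ may be unbounded (e.g. $r(t)=1/t$ in the uniform case): this is harmless because $\int_{t}^{u}r(s)\,ds=-\log F(t)<\infty$ for every interior $t$, so the formula for $F(t)$ is well defined on the interior and continuity pins it down at the endpoints. I expect this endpoint bookkeeping to be the subtlest part of a fully rigorous write‑up, though it presents no real difficulty.
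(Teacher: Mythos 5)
Your proof is correct. The paper does not actually write out an argument for this theorem (it defers to \cite{nair2020dynamic}), but your route is the natural one built directly on the machinery the paper already sets up: strict negativity of $\mathcal{E}_{f}(X;t)$ from (\ref{eq3.2}) lets you solve the linear relation (\ref{eq3.3}) for $r(t)$, and the standard inversion $F(t)=\exp\bigl(-\int_{t}^{u}r(s)\,ds\bigr)$ then recovers the cdf; your handling of the two delicate points (nonvanishing of the DFE, and the possibly unbounded reversed hazard rate near the left endpoint, which is harmless since $\int_{t}^{u}r(s)\,ds=-\log F(t)<\infty$ for interior $t$) is sound, so the argument stands as a complete, self-contained proof of the uniqueness claim.
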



Let $(X_{1},X_{2})$ be the lifetimes of two units of a system. Assume that the first and second components are found failed at respective times $t_1$ and $t_2$. Then, the unceratinty associated with the past lifetimes of the system can be quantified by  the two dimensional extension of the DFE. It is given by
\begin{eqnarray}
\mathcal{E}_{f}(X_{1},X_{2};t_{1},t_{2})=\frac{1}{4}\int_{0}^{t_{1}}\int_{0}^{t_{2}}\left(\frac{K(x_{1},x_{2})}{K(t_1,t_2)}\right)^{2}dx_{1}dx_{2}.
\end{eqnarray}
Further, let $X_{1}$ and $X_{2}$ be independent. Therefore,
\begin{eqnarray}
\mathcal{E}_{f}(X_{1},X_{2};t_{1},t_{2})=\mathcal{E}_{f}(X_{1};t_{1})\mathcal{E}_{f}(X_{2};t_{2}).
\end{eqnarray}
Similar to Proposition \ref{prop2.6}, it can be shown that the bivariate DFE is also not invariant under the nonsingular transformations $Y_{i}=\psi(X_{i}),~i=1,2$.

\section{Weighted (dynamic) failure extropy\setcounter{equation}{0}}
We now turn to the weighted versions of the failure extropy and DFE. In \cite{balakrishnan2020weighted}, the authors studied the weighted versions of extropy and past extropy. First, let us discuss the weighted failure extropy. Note that the weighted distributions were introduced in the literature to get better fitted model of a dataset. Let
$X$ be a nonnegative absolutely continuous random
variable with cdf $F$ and pdf $f$. Then, the
weighted (length-biased) random variable associated with the
random variable $X$, denoted by $X_{w}$ has the pdf
$f_{w}(t)=(w(t)f(t))/\mu_{w}$ and cdf
$F_{w}(t)=(E[w(X)|X\le t]F(t))/\mu_{w}$, where
$w(x)~(\ge0)$ is called weight function and
$\mu_{w}=E(w(X))<\infty.$ Utilizing these, the
failure extropy of $X_{w}$ is defined as
\begin{eqnarray}
\mathcal{E}_{f}^{w}(X)&=&-\frac{1}{2}\int_{0}^{\infty}F^{2}_{w}(x)dx\nonumber\\
&=&
-\frac{1}{2\mu_{w}^{2}}\int_{0}^{\infty}\{E[w(X)|X\le
x]F(x)\}^{2}dx.
\end{eqnarray}
This is known as the weighted failure extropy (WFE). It
takes negative values. Note that the WFE becomes the length-biased failure
extropy when $w(x)=x.$ Let us denote the
length-biased random variable by $X_{l}.$ Then,
the pdf of $X_{l}$ is $f_{l}(t)=(tf(t))/\mu_{l}$
and cdf is $F_{l}(t)=(E[X|X\le t]F(t))/\mu_{l},$
where $\mu_{l}=E(X).$ The failure extropy of
$X_{l}$ is given by
\begin{eqnarray}
\mathcal{E}_{f}^{l}(X)&=&-\frac{1}{2}\int_{0}^{\infty}F^{2}_{l}(x)dx\nonumber\\
&=&
-\frac{1}{2\mu_{l}^{2}}\int_{0}^{\infty}\{E[X|X\le
x]F(x)\}^{2}dx.
\end{eqnarray}
The following example illustrates the importance
of the weighted (length-biased) failure extropy.
\begin{example}
Consider two random variables $X$ and $Y$ with
respective cdfs
\begin{eqnarray}
F(x)=\begin{cases} 0,& if~x\le2\\\frac{x-2}{2}, & if~ 2<x<4 \\
1, & if~x\ge 4
\end{cases}
\end{eqnarray}
and
\begin{eqnarray}
G(x)=\begin{cases} 0,& if~x\le4\\\frac{x-4}{2}, & if~ 4<x<6 \\
1, & if~x\ge 6.
\end{cases}
\end{eqnarray}
It can be computed that
$\mathcal{E}_{f}(X)=\mathcal{E}_{f}(Y)=-\frac{1}{3},$
that is, the expected uncertainties contained in
both distributions are the same. However, their
associated uncertainties quantified via weighted
(length-biased) failure entropy are not the same.
These are obtained as
$\mathcal{E}_{f}^{l}(X)\approx -0.2816$ and
$\mathcal{E}_{f}^{l}(Y)\approx -0.813$.
\end{example}
The following proposition provides a lower bound
of the weighted failure extropy based on the
failure extropy. The similar statement also holds for the case of the length-biased failure extropy. The proof is straightforward and
thus, it is omitted.
\begin{proposition}
Let us assume that $E[w(X)|X\le x]\le E[w(X)].$
Then, $\mathcal{E}_{f}^{w}(X)\ge
\mathcal{E}_{f}(X).$
\end{proposition}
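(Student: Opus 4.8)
The plan is to reduce everything to a pointwise comparison of the two relevant cumulative distribution functions and then integrate. Recall that $\mathcal{E}_f(X)=-\frac12\int_0^\infty F^2(x)\,dx$ and $\mathcal{E}_f^w(X)=-\frac12\int_0^\infty F_w^2(x)\,dx$, where the cdf of the weighted random variable is $F_w(x)=\dfrac{E[w(X)\mid X\le x]\,F(x)}{\mu_w}$ with $\mu_w=E[w(X)]$. The first and only substantive step is to observe that the hypothesis $E[w(X)\mid X\le x]\le E[w(X)]=\mu_w$ is exactly the statement that the factor $E[w(X)\mid X\le x]/\mu_w$ lies in $[0,1]$ for every $x$ in the support (it is nonnegative because $w\ge 0$). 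Consequently
\[
0\le F_w(x)=\frac{E[w(X)\mid X\le x]}{\mu_w}\,F(x)\le F(x),\qquad x\ge 0.
\]

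Next I would square this chain of inequalities, which is legitimate since all three quantities are nonnegative, to get $0\le F_w^2(x)\le F^2(x)$ for all $x\ge 0$. Integrating over $[0,\infty)$ and using monotonicity of the Lebesgue integral yields $\int_0^\infty F_w^2(x)\,dx\le \int_0^\infty F^2(x)\,dx$ (this holds even in the degenerate case where the right-hand integral is $+\infty$, in which case the conclusion $\mathcal{E}_f^w(X)\ge\mathcal{E}_f(X)=-\infty$ is automatic). Multiplying both sides by $-\tfrac12$ reverses the inequality and gives precisely $\mathcal{E}_f^w(X)\ge\mathcal{E}_f(X)$.

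There is essentially no hard part here: the argument is a one-line monotonicity estimate once the explicit form of $F_w$ is written down, which is why the paper flags the proof as straightforward and omits it. The only points worth a sentence are (i) noting that $w\ge 0$ guarantees $F_w\ge 0$ so that squaring preserves the inequality, and (ii) remarking that the identical reasoning applies verbatim to the length-biased case $w(x)=x$, since then $E[X\mid X\le x]\le E[X]=\mu_l$ is the corresponding hypothesis and $F_l(x)\le F(x)$ follows in the same way, giving $\mathcal{E}_f^l(X)\ge\mathcal{E}_f(X)$.
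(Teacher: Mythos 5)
Your proof is correct, and it is exactly the straightforward pointwise argument ($F_w(x)\le F(x)$ under the hypothesis, hence $F_w^2\le F^2$, then integrate and multiply by $-\tfrac12$) that the paper has in mind when it omits the proof as straightforward. No gaps; the remarks on nonnegativity before squaring and on the length-biased special case are consistent with the paper's accompanying comment that the analogous statement holds for $\mathcal{E}_f^l(X)$.
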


In Section $2$, the inequalities of the failure extropies of the random variables $X_{\tau}^*$ and $X$ are obtained. Here, we try to obtain similar inequalities for the weighted version of the failure extropies of  $X_{\tau}^*$ and $X$. We denote the weighted random variable associated with $X_{\tau}^*$ by $X_{w}^*.$ The cdf of $X_{w}^*$ is given by $F_{w}^{*}(x)=(E[w(X)|X\le x]F^{\tau}(x))/\mu_{w}$.
Thus, we have the following result. Its proof is similar to Proposition \ref{prop2.2}, and thus it is omitted.
\begin{proposition}
	Let $X$ be a nonnegative and absolutely continuous random variable with cdf $F$. Further, let $X_{\tau}^*$ be another random variable with cdf  $F^*(x)=[F(x)]^{\tau},~\tau>0.$ Then, $\mathcal{E}_{f}^{w}(X_{\tau}^*)\ge (\le) \mathcal{E}_{f}^{w}(X)$ for $\tau>(<)1$.
\end{proposition}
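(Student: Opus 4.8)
The plan is to mimic the proof of Proposition \ref{prop2.2} essentially verbatim, since the weighted failure extropy $\mathcal{E}_{f}^{w}(X)=-\frac{1}{2\mu_{w}^{2}}\int_{0}^{\infty}\{E[w(X)\mid X\le x]F(x)\}^{2}dx$ and its counterpart $\mathcal{E}_{f}^{w}(X_{\tau}^{*})=-\frac{1}{2\mu_{w}^{2}}\int_{0}^{\infty}\{E[w(X)\mid X\le x]F^{\tau}(x)\}^{2}dx$ differ only in whether the cdf appears as $F(x)$ or $F^{\tau}(x)$ inside the square. First I would write down both expressions explicitly and observe that the factor $E[w(X)\mid X\le x]^{2}\ge 0$ and the constant $-\frac{1}{2\mu_{w}^{2}}$ are common to both integrands; they do not depend on $\tau$. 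The comparison therefore reduces to comparing $F^{2}(x)$ with $F^{2\tau}(x)=[F(x)]^{2\tau}$ pointwise on the support.

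The key monotonicity fact is that for $0\le F(x)\le 1$ one has $F^{2\tau}(x)\le F^{2}(x)$ when $\tau>1$ and $F^{2\tau}(x)\ge F^{2}(x)$ when $0<\tau<1$, because $s\mapsto s^{c}$ is decreasing in the exponent $c$ for fixed $s\in[0,1]$. Multiplying this inequality through by the nonnegative weight $E[w(X)\mid X\le x]^{2}$ preserves it, and integrating over $x\in(0,\infty)$ preserves it as well. Finally, multiplying by the negative constant $-\frac{1}{2\mu_{w}^{2}}$ reverses the direction, so for $\tau>1$ we get $\mathcal{E}_{f}^{w}(X_{\tau}^{*})\ge\mathcal{E}_{f}^{w}(X)$, and for $0<\tau<1$ we get $\mathcal{E}_{f}^{w}(X_{\tau}^{*})\le\mathcal{E}_{f}^{w}(X)$, which is exactly the claim. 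One should note the (harmless) edge case $\tau=1$, where equality holds trivially.

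There is essentially no real obstacle here; the statement is a routine pointwise-comparison-then-integrate argument, and the paper itself flags that the proof is ``similar to Proposition \ref{prop2.2}'' and omits it. The one point that merits a sentence of care is that the weight function $w$ and hence $E[w(X)\mid X\le x]$ and $\mu_{w}$ are held fixed across the two models: the weighted random variable $X_{w}^{*}$ associated with $X_{\tau}^{*}$ is defined in the excerpt with cdf $F_{w}^{*}(x)=(E[w(X)\mid X\le x]F^{\tau}(x))/\mu_{w}$, i.e. the conditional-expectation factor is computed with respect to the original $X$, not the tilted one, so the two integrands genuinely share that factor and the reduction to comparing $F^{2}$ with $F^{2\tau}$ is legitimate. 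Given that, the whole argument is three lines and can reasonably be left as ``omitted,'' exactly as the authors do.
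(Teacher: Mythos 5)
Your proposal is correct and follows essentially the same route the paper intends: the pointwise comparison $F^{2\tau}(x)\le(\ge)F^{2}(x)$ for $\tau>1$ ($\tau<1$), combined with the fact that the common nonnegative factor $\{E[w(X)\mid X\le x]\}^{2}/\mu_{w}^{2}$ and the negative constant $-\tfrac{1}{2}$ carry the inequality through integration, which is exactly the argument of Proposition \ref{prop2.2} that the paper invokes. Your remark that the conditional-expectation factor in $F_{w}^{*}$ is, by the paper's own definition, computed under the original $X$ is the right point to flag, and it is what makes the reduction legitimate.
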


Next, we introduce weighted form of the DFE. For the weighted random variable $X_{w}$,
the DFE is defined as
\begin{eqnarray}\label{eq4.3}
\mathcal{E}_{f}^{w}(X;t)&=&-\frac{1}{2}\int_{0}^{t}\frac{F^{2}_{w}(x)}{F^{2}_{w}(t)}dx \nonumber\\
&=& -\frac{1}{2}\int_{0}^{t}\frac{(E[w(X)|X\le
x]F(x))^{2}}{(E[w(X)|X\le t]F(t))^{2}}dx.
\end{eqnarray}
This measure is known as the weighted DFE of the random variable $X$. Further, the length-biased DFE can be
obtained from (\ref{eq4.3}) by substituting
$w(x)=x.$ It is given by
\begin{eqnarray}\label{eq4.4}
\mathcal{E}_{f}^{l}(X;t)&=&-\frac{1}{2}\int_{0}^{t}\frac{F^{2}_{l}(x)}{F^{2}_{l}(t)}dx \nonumber\\
&=& -\frac{1}{2}\int_{0}^{t}\frac{(E[X|X\le
x]F(x))^{2}}{(E[X|X\le t]F(t))^{2}}dx.
\end{eqnarray}
Differentiating (\ref{eq4.3}) with respect to
$t$, we obtain
\begin{eqnarray}\label{eq4.5}
\frac{d}{dt}\mathcal{E}_{f}^{w}(X;t)=-2
r_{f}^{w}(t)\mathcal{E}_{f}^{w}(X;t)-\frac{1}{2},
\end{eqnarray}
where $r_{f}^{w}(t)=f_{w}(t)/F_{w}(t)$. Thus, we
have the following proposition dealing with the
bound of $\mathcal{E}_{f}^{w}(X;t).$
\begin{proposition}
Let $\mathcal{E}_{f}^{w}(X;t)$ be
decreasing with respect to $t$. Then,
$\mathcal{E}_{f}^{w}(X;t)\ge-1/(4r_{f}^{w}(t)).$
\end{proposition}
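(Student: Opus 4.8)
The plan is to read off the bound directly from the first-order differential identity~(\ref{eq4.5}), exactly as was done for the non-weighted dynamic failure extropy via~(\ref{eq3.3}). First I would recall that~(\ref{eq4.5}) states
\begin{eqnarray*}
\frac{d}{dt}\mathcal{E}_{f}^{w}(X;t)=-2 r_{f}^{w}(t)\,\mathcal{E}_{f}^{w}(X;t)-\frac{1}{2},
\end{eqnarray*}
which holds at every $t>0$ with $F_{w}(t)>0$.

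The hypothesis that $\mathcal{E}_{f}^{w}(X;t)$ is decreasing in $t$ means precisely that $\frac{d}{dt}\mathcal{E}_{f}^{w}(X;t)\le 0$ for all such $t$. Substituting this into the identity above gives
\begin{eqnarray*}
-2 r_{f}^{w}(t)\,\mathcal{E}_{f}^{w}(X;t)-\frac{1}{2}\le 0,
\end{eqnarray*}
i.e.\ $-2 r_{f}^{w}(t)\,\mathcal{E}_{f}^{w}(X;t)\le \frac{1}{2}$. Since $r_{f}^{w}(t)=f_{w}(t)/F_{w}(t)>0$ (both $f_{w}$ and $F_{w}$ being, respectively, a density and a positive value of a cdf on the relevant range), I would then divide both sides by the positive quantity $2 r_{f}^{w}(t)$, which reverses the inequality, yielding $\mathcal{E}_{f}^{w}(X;t)\ge -\frac{1}{4 r_{f}^{w}(t)}$, as claimed.

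There is essentially no analytic obstacle here; the only point that needs a word of care is the sign of $r_{f}^{w}(t)$, which guarantees that dividing through flips the inequality in the right direction, and a sanity check that the resulting bound is consistent with $\mathcal{E}_{f}^{w}(X;t)$ being negative. Accordingly I would keep the write-up to a couple of lines, citing~(\ref{eq4.5}) and the monotonicity assumption.
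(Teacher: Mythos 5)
Your proposal is correct and is exactly the paper's route: the paper declares the result ``immediate from (\ref{eq4.5})'', and your argument — decreasing implies $\frac{d}{dt}\mathcal{E}_{f}^{w}(X;t)\le 0$, substitute into (\ref{eq4.5}) and rearrange using $r_{f}^{w}(t)>0$ — is precisely that immediate argument written out. One tiny wording slip: the inequality flips because you effectively divide by the \emph{negative} quantity $-2r_{f}^{w}(t)$ (equivalently, multiply by $-1$ after dividing by the positive $2r_{f}^{w}(t)$), not because dividing by a positive number reverses it; the conclusion $\mathcal{E}_{f}^{w}(X;t)\ge -1/(4r_{f}^{w}(t))$ is unaffected.
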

\begin{proof}
The proof of the proposition is immediate from
(\ref{eq4.5}). Thus, it is omitted.
\end{proof}
Similar to Proposition \ref{prop3.4}, we get the following result for the reversed hazard rate of the weighted random variable.
\begin{proposition}
	Let $\mathcal{E}_{f}^{w}(X;t)$ be decreasing and concave. Then, the weighted random variable $X_{w}$ has decreasing reversed hazard rate.
\end{proposition}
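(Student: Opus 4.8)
The plan is to mimic the proof of Proposition~\ref{prop3.4}, applied to the weighted random variable $X_{w}$ in place of $X$. First I would note that $\mathcal{E}_{f}^{w}(X;t)$, as written in (\ref{eq4.3}), is precisely the dynamic failure extropy of $X_{w}$, i.e. the quantity obtained by replacing $F$ with $F_{w}$ in (\ref{eq3.1}). Consequently the algebraic manipulations of Section~3 carry over verbatim: from $F_{w}^{2}(t)\,\mathcal{E}_{f}^{w}(X;t)=-\tfrac12\int_{0}^{t}F_{w}^{2}(x)\,dx$ one differentiates once to recover (\ref{eq4.5}), with $r_{f}^{w}(t)=f_{w}(t)/F_{w}(t)$ playing the role of the reversed hazard rate of $X_{w}$. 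Differentiating (\ref{eq4.5}) once more gives the exact analogue of (\ref{eq3.5}),
\[
\frac{d^{2}}{dt^{2}}\mathcal{E}_{f}^{w}(X;t)+2\,(r_{f}^{w})'(t)\,\mathcal{E}_{f}^{w}(X;t)+2\,r_{f}^{w}(t)\,\frac{d}{dt}\mathcal{E}_{f}^{w}(X;t)=0.
\]

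Next I would solve this identity for $(r_{f}^{w})'(t)$, writing
\[
2\,(r_{f}^{w})'(t)\,\mathcal{E}_{f}^{w}(X;t)=-\frac{d^{2}}{dt^{2}}\mathcal{E}_{f}^{w}(X;t)-2\,r_{f}^{w}(t)\,\frac{d}{dt}\mathcal{E}_{f}^{w}(X;t),
\]
and read off signs. By concavity, $-\frac{d^{2}}{dt^{2}}\mathcal{E}_{f}^{w}(X;t)\ge 0$; since $r_{f}^{w}(t)\ge 0$ and, by the assumed monotonicity, $\frac{d}{dt}\mathcal{E}_{f}^{w}(X;t)\le 0$, also $-2\,r_{f}^{w}(t)\,\frac{d}{dt}\mathcal{E}_{f}^{w}(X;t)\ge 0$. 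Hence the right-hand side is nonnegative. On the left, $\mathcal{E}_{f}^{w}(X;t)<0$ (it is $-\tfrac12$ times an integral of a nonnegative function), so dividing by $2\,\mathcal{E}_{f}^{w}(X;t)$ reverses the inequality and yields $(r_{f}^{w})'(t)\le 0$; that is, $X_{w}$ has decreasing reversed hazard rate, as claimed.

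The only point that is genuinely new relative to Proposition~\ref{prop3.4} is the observation that $\mathcal{E}_{f}^{w}(X;t)$ obeys the same first-order relation (\ref{eq4.5}) with $r$ replaced by $r_{f}^{w}$; once that is in place the argument is the identical sign chase. The mild technical obstacle is justifying the two differentiations, i.e. checking that $F_{w}$ is differentiable and $F_{w}(t)>0$ on the range under consideration, which is guaranteed by the standing hypothesis that $X$ (and hence $X_{w}$) is absolutely continuous with $F_{w}(t)>0$. Since the whole argument parallels Proposition~\ref{prop3.4}, I would present it briefly and state that the details are omitted.
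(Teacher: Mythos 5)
Your proof is correct and is exactly the argument the paper intends: it treats $\mathcal{E}_{f}^{w}(X;t)$ as the dynamic failure extropy of $X_{w}$, differentiates the analogue of (\ref{eq3.2}) twice to get the weighted counterpart of (\ref{eq3.5}), and runs the same sign chase as in Proposition \ref{prop3.4}, using $\mathcal{E}_{f}^{w}(X;t)<0$, the decreasingness, the concavity, and $r_{f}^{w}(t)\ge 0$ to conclude $(r_{f}^{w})'(t)\le 0$. No gaps; this matches the paper's (omitted) proof.
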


\begin{proposition}
If $E[w(X)|X\le x]\le (\ge) E[w(X)|X\le t]$ for $x\le t$, then $\mathcal{E}_{f}^{w}(X;t)\ge (\le) \mathcal{E}_{f}(X;t)$.
\end{proposition}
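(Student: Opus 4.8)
The plan is to reduce the claim to a pointwise comparison of the two integrands appearing in (\ref{eq4.3}) and (\ref{eq3.1}), and then integrate. First I would write, for $0<x\le t$ (with $F(t)>0$ and $\mu_w>0$ so all quantities are well defined),
\begin{eqnarray*}
\frac{F_w^2(x)}{F_w^2(t)}=\left(\frac{E[w(X)\mid X\le x]}{E[w(X)\mid X\le t]}\right)^{2}\frac{F^2(x)}{F^2(t)},
\end{eqnarray*}
which follows directly from the expression $F_w(t)=(E[w(X)\mid X\le t]F(t))/\mu_w$, since the factor $\mu_w$ cancels in the ratio. Because $w\ge 0$, both conditional expectations are nonnegative, so the prefactor $\big(E[w(X)\mid X\le x]/E[w(X)\mid X\le t]\big)^2$ is nonnegative, and the hypothesis $E[w(X)\mid X\le x]\le E[w(X)\mid X\le t]$ (resp.\ $\ge$) for all $x\le t$ makes this prefactor $\le 1$ (resp.\ $\ge 1$).

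Next I would use this to compare the integrands pointwise: under the hypothesis $E[w(X)\mid X\le x]\le E[w(X)\mid X\le t]$ we get
\begin{eqnarray*}
0\le \frac{F_w^2(x)}{F_w^2(t)}\le \frac{F^2(x)}{F^2(t)}\qquad\text{for all }0<x\le t,
\end{eqnarray*}
and the reverse chain of inequalities under the opposite hypothesis. Integrating over $x\in(0,t)$ preserves the inequality (all integrands are nonnegative), and then multiplying by $-\tfrac12$ reverses it, giving
\begin{eqnarray*}
\mathcal{E}_f^{w}(X;t)=-\frac12\int_0^t\frac{F_w^2(x)}{F_w^2(t)}dx\ \ge\ -\frac12\int_0^t\frac{F^2(x)}{F^2(t)}dx=\mathcal{E}_f(X;t),
\end{eqnarray*}
with the inequality reversed in the other case. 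This completes the argument.

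There is essentially no hard step here; the only point requiring a little care is the cancellation of $\mu_w$ in the ratio $F_w(x)/F_w(t)$ and the observation that the sign of the final inequality flips because of the factor $-\tfrac12$ in the definition of the (dynamic) failure extropy. One should also record the standing assumptions $F(t)>0$ and $\mu_w=E[w(X)]<\infty$ (and $E[w(X)\mid X\le t]>0$), under which all the displayed ratios are meaningful; these are already in force from the definitions in Section~4.
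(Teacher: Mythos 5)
Your argument is correct: the ratio $F_w(x)/F_w(t)$ reduces to $\bigl(E[w(X)\mid X\le x]/E[w(X)\mid X\le t]\bigr)\,F(x)/F(t)$, the hypothesis gives the pointwise comparison of the squared integrands, and the factor $-\tfrac12$ flips the sign after integrating. This is exactly the straightforward comparison the paper has in mind (it omits the proof as immediate), so your proposal matches the intended route.
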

\begin{proof}
	The proof is straightforward and thus, it is omitted.
\end{proof}
The following corollary is immediate from the above proposition.
\begin{corollary}
	If $E[X|X\le x]\le (\ge) E[X|X\le t]$ for $x\le t$, then $\mathcal{E}_{f}^{l}(X;t)\ge (\le) \mathcal{E}_{f}(X;t)$.
\end{corollary}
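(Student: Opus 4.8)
The plan is to specialize the preceding proposition to the weight function $w(x)=x$. First I would recall that, by definition, the length-biased random variable $X_{l}$ is precisely the weighted random variable $X_{w}$ corresponding to $w(x)=x$: in that case $\mu_{w}=E(w(X))=E(X)=\mu_{l}$ and $E[w(X)\mid X\le x]=E[X\mid X\le x]$, so the cdf $F_{w}$ coincides with $F_{l}$. Comparing the two displays (\ref{eq4.3}) and (\ref{eq4.4}), this immediately yields $\mathcal{E}_{f}^{w}(X;t)=\mathcal{E}_{f}^{l}(X;t)$ for this choice of weight.

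Next I would note that the hypothesis of the corollary, namely $E[X\mid X\le x]\le(\ge)E[X\mid X\le t]$ for $x\le t$, is exactly the hypothesis of the preceding proposition written out with $w(x)=x$. Hence that proposition applies verbatim and gives $\mathcal{E}_{f}^{w}(X;t)\ge(\le)\mathcal{E}_{f}(X;t)$, which by the identification in the previous paragraph is the desired conclusion $\mathcal{E}_{f}^{l}(X;t)\ge(\le)\mathcal{E}_{f}(X;t)$.

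For completeness one may also recall the one-line argument behind the proposition itself: from (\ref{eq4.3}) we can rewrite
\[
\mathcal{E}_{f}^{w}(X;t)=-\frac{1}{2}\int_{0}^{t}\left(\frac{E[w(X)\mid X\le x]}{E[w(X)\mid X\le t]}\right)^{2}\left(\frac{F(x)}{F(t)}\right)^{2}dx,
\]
and under the stated monotonicity the first squared factor is $\le 1$ (respectively $\ge 1$) throughout $[0,t]$; pulling this factor out of the negative integral produces the comparison with $\mathcal{E}_{f}(X;t)=-\frac{1}{2}\int_{0}^{t}(F(x)/F(t))^{2}dx$. I do not expect any real obstacle here: the only things to verify are the bookkeeping identities $\mu_{l}=\mu_{w}$ and the coincidence of the integrands of (\ref{eq4.3}) and (\ref{eq4.4}) when $w(x)=x$, together with the trivial step from monotonicity of the conditional mean to the pointwise bound on the squared ratio — all routine. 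The one point worth mentioning explicitly is that $E[X\mid X\le x]$ is finite for $x$ in the support and $\mu_{l}=E(X)<\infty$, as assumed in the weighted-distribution setup, so that all quantities are well defined.
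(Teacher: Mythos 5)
Your proof is correct and matches the paper's approach: the paper states the corollary is immediate from the preceding proposition, i.e.\ exactly the specialization to $w(x)=x$ that you carry out. The extra verification of the integrand comparison is fine but not needed beyond what the proposition already provides.
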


Now, we provide the effect of the scale
transformation to the length-biased DFE.
\begin{proposition}
Let $Y=aX$, where $a>0$. Then,
$\mathcal{E}_{f}^{l}(Y;t)=a\mathcal{E}_{f}^{l}(X;t/a).$
\end{proposition}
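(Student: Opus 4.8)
The plan is to compute the length-biased DFE of $Y=aX$ directly from the definition in~(\ref{eq4.4}), reducing it to the corresponding quantity for $X$ by a change of variables. First I would record the elementary scaling relations for the ingredients of $F_{l}$. If $Y=aX$ with $a>0$, then the cdf satisfies $F_{Y}(y)=F(y/a)$, the mean satisfies $E(Y)=aE(X)$, and the truncated mean satisfies $E[Y\mid Y\le t]=a\,E[X\mid X\le t/a]$, since $\{Y\le t\}=\{X\le t/a\}$. Consequently the length-biased cdf of $Y$ is
\begin{eqnarray*}
F_{l}^{Y}(y)=\frac{E[Y\mid Y\le y]\,F_{Y}(y)}{E(Y)}
=\frac{a\,E[X\mid X\le y/a]\,F(y/a)}{a\,E(X)}
=F_{l}^{X}(y/a),
\end{eqnarray*}
so the length-biasing operation commutes with the scale change in exactly the same way the plain cdf does.

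Next I would substitute this into~(\ref{eq4.4}) for $Y$ at time $t$:
\begin{eqnarray*}
\mathcal{E}_{f}^{l}(Y;t)=-\frac{1}{2}\int_{0}^{t}\frac{(F_{l}^{Y}(x))^{2}}{(F_{l}^{Y}(t))^{2}}dx
=-\frac{1}{2}\int_{0}^{t}\frac{(F_{l}^{X}(x/a))^{2}}{(F_{l}^{X}(t/a))^{2}}dx.
\end{eqnarray*}
Now apply the change of variable $u=x/a$, $dx=a\,du$, which sends the interval $[0,t]$ to $[0,t/a]$:
\begin{eqnarray*}
\mathcal{E}_{f}^{l}(Y;t)=-\frac{a}{2}\int_{0}^{t/a}\frac{(F_{l}^{X}(u))^{2}}{(F_{l}^{X}(t/a))^{2}}du
=a\left(-\frac{1}{2}\int_{0}^{t/a}\frac{(F_{l}^{X}(u))^{2}}{(F_{l}^{X}(t/a))^{2}}du\right)
=a\,\mathcal{E}_{f}^{l}(X;t/a),
\end{eqnarray*}
which is the claim. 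This is essentially the length-biased analogue of Proposition~\ref{prop3.2} with $\alpha=a$, $\beta=0$, and indeed one could alternatively invoke Theorem~\ref{th3.1} applied to the length-biased variable $X_{l}$ together with the identity $(aX)_{l}=a(X_{l})$ in distribution, bypassing the explicit computation.

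I do not anticipate a genuine obstacle here; the only point requiring a moment's care is verifying that length-biasing and scaling commute, i.e. that $(aX)_{l}\stackrel{d}{=}a(X_{l})$, which is the content of the displayed identity $F_{l}^{Y}(y)=F_{l}^{X}(y/a)$ above and follows from $E[aX\mid aX\le y]=a\,E[X\mid X\le y/a]$. Once that is in hand the result is a one-line substitution, so in the write-up I would state the commutation identity, then the change of variables, and note that the same argument with a general weight $w$ would require tracking how $E[w(aX)\mid aX\le x]$ relates to $E[w(X)\mid X\le x/a]$, which is why the clean statement is given for the length-biased case $w(x)=x$.
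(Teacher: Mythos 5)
Your proof is correct, and it is exactly the direct computation the paper has in mind: the paper states this proposition without proof, treating it as an immediate consequence of the definition in (\ref{eq4.4}), and your verification that length-biasing commutes with scaling, $F_{l}^{Y}(y)=F_{l}^{X}(y/a)$, followed by the change of variable $u=x/a$, is the standard argument that justifies the omitted step. No issues.
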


Now, we define a nonparametric class based on the weighted DFE.
\begin{definition}
	A random variable $X$ with cdf $F$ is said to have decreasing weighted DFE, denoted by DWDFE if $\mathcal{E}_{f}^{w}(X;t)$ is decreasing with respect to $t>0.$
\end{definition}


\section{Estimation\setcounter{equation}{0}}
In this section, we propose nonparametric estimators for the failure extropy, DFE and its weighted versions based on the empirical cumulative distribution function. Consider a simple random sample $(X_{1},\ldots,X_{n})$ of size $n$ from a population with cdf $F.$ The empirical distribution function of the sample is
\begin{eqnarray}
\tilde{F}_{n}(x)=\frac{1}{n}\sum_{i=1}^{n}I(X_{i}\le x),~~x\in \mathbb{R},
\end{eqnarray}
where $I$ is the indicator function. Denote the order statistics of $(X_{1},\ldots,X_{n})$ by $X_{(1)}\le\ldots\le X_{(i)}\le\ldots\le X_{(n)},$ where $X_{(i)}$ is the $i$th order statistic, $i=1,\ldots,n.$ Again,
\begin{eqnarray*}
\tilde{F}_{n}(x) = \displaystyle\left\{\begin{array}{ll} 0,
& \textrm{ $x<x_{(1)},$}\\
\frac{j}{n},& \textrm{$x_{(j)}\leq x<x_{(j+1)},~j=1,2,\ldots,
n-1,$}\\
1,& \textrm{$x\geq x_{(n)},$}
\end{array} \right.
\end{eqnarray*}
where $x_{(i)}$ is the realization of $X_{(i)}$. Thus, the empirical estimate of the failure extropy is
\begin{eqnarray}
\mathcal{E}_{f}(\tilde{F}_{n})&=&-\frac{1}{2}\int_{0}^{\sup S_{X}}\tilde{F}_{n}^{2}(x)dx\nonumber\\
&=&-\frac{1}{2}\sum_{j=1}^{n-1}\int_{x_{(j)}}^{x_{(j+1)}}\tilde{F}_{n}^{2}(x)dx\nonumber\\
&=&-\frac{1}{2}\sum_{j=1}^{n-1}\left(\frac{j}{n}\right)^{2}(x_{(j+1)}-x_{(j)}).
\end{eqnarray}
We consider two examples (see also \cite{kayal2016generalized} and \cite{di2017further}) which are dealing with exponential and uniform distributions.
\begin{example}
	Let $X_{1},\ldots,X_{n}$ be a random sample from exponential distribution with mean $1/\lambda,$ where $\lambda>0.$ Then, because of the independent sample spacings, $X_{(j+1)}-X_{(j)}$ follows exponential distribution with mean $1/(\lambda(n-j)).$ Thus, we have
	\begin{eqnarray}
	E[\mathcal{E}_{f}(\tilde{F}_{n})]=-\frac{1}{2}\sum_{j=1}^{n-1}\left(\frac{j}{n}\right)^{2}\frac{1}{\lambda(n-j)}
	\end{eqnarray}
	and
	\begin{eqnarray}
	 Var[\mathcal{E}_{f}(\tilde{F}_{n})]=\frac{1}{4}\sum_{j=1}^{n-1}\left(\frac{j}{n}\right)^{4}\frac{1}{\lambda^{2}(n-j)^{2}}.
	\end{eqnarray}
\end{example}
\begin{example}\label{ex5.2}
	Consider a random sample $X_{1},\ldots,X_{n}$ from the uniform distribution in $(0,1)$. Then,  $X_{(j+1)}-X_{(j)}$ follows beta distribution with parameters $1$ and $n$ with mean $E[X_{(j+1)}-X_{(j)}]=1/(n+1).$ Then.
	\begin{eqnarray}
	E[\mathcal{E}_{f}(\tilde{F}_{n})]=-\frac{1}{2}\sum_{j=1}^{n-1}\left(\frac{j}{n}\right)^{2}\frac{1}{(n+1)}
	\end{eqnarray}
	and
	\begin{eqnarray}
	 Var[\mathcal{E}_{f}(\tilde{F}_{n})]=\frac{1}{4}\sum_{j=1}^{n-1}\left(\frac{j}{n}\right)^{4}\frac{n}{(n+1)^{2}(n+2)}.
	\end{eqnarray}
\end{example}
\begin{remark}
Clearly, from Example \ref{ex5.2}, we have $$\lim_{n\rightarrow \infty}E[\mathcal{E}_{f}(\tilde{F}_{n})]=-1/6~~ and ~~\lim_{n\rightarrow \infty}Var[\mathcal{E}_{f}(\tilde{F}_{n})]=0.$$ This implies that the nonparametric estimator of the failure extropy is consistent when the random sample is taken from $U(0,1)$
distribution.
\end{remark}

Now, for the case of the DFE, we assume that $x_{(1)}\le\ldots\le x_{(k)}\le t \le x_{(k+1)}\le \ldots \le x_{(n)},$ where $k=1,\ldots,n-1.$ Thus, the empirical DFE is given by
\begin{eqnarray}
\mathcal{E}_{f}(\tilde{F}_{n};t)&=&-\frac{1}{2}\left[\sum_{j=1}^{k-1}\left(\frac{j}{k}\right)^{2}(x_{(j+1)}-x_{(j)})
+(t-x_{(k)})\right].
\end{eqnarray}
Further, the nonparametric estimate of $\mathcal{E}_{f}^{w}(X)$ and $\mathcal{E}_{f}^{w}(X;t)$ are respectively given by
\begin{eqnarray}\label{eq5.8}
\mathcal{E}_{f}^{w}(\tilde{F}_{n})&=&-\frac{1}{2}\left[\sum_{j=1}^{n-1}\left(\frac{j\sum_{i=1}^{j}w(x_{(i)})}{n\sum_{i=1}^{n}w(x_{(i)})}\right)^{2}(x_{(j+1)}-x_{(j)})
	\right],\\
	 \mathcal{E}_{f}^{w}(\tilde{F}_{n};t)&=&-\frac{1}{2}\left[\sum_{j=1}^{k-1}\left(\frac{j\sum_{i=1}^{j}w(x_{(i)})}{k\sum_{i=1}^{k}w(x_{(i)})}\right)^{2}(x_{(j+1)}-x_{(j)})+(t-x_{(k)})\right].
	\end{eqnarray}
Next, we consider two real life data sets. The first consists of  lifetimes (in days) of blood cancer patients (see \cite{abu2002moment}) and the second contains  spike times (in microseconds) of a neuron (see \cite{kass2003statistical}).
\begin{example}\label{ex5.3}~\\\\
{\bf (i)~ Lifetimes of blood cancer patients:-}\\ $115,181,255,418,441,461,516,739,743,789,807,865,924,983,1024,1062,1063,1165,1191,\\
1222,1222,1251,1277,1290,1357,1369,1408,1455,1478,1549,1578,1578,1599,1603,1605,\\1696,
1735,1799,1815,1852.$ \\ \\
{\bf (ii)~ Spike times observed in $8$
trials on a single neuron:-}\\
$
136.842,145.965,155.088,175.439,184.561,199.298,221.053,231.579,246.316,263.158,
274.386,\\282.105,317.193,329.123,347.368,360.702,368.421,389.474,392.982,432.281,449.123,
463.86,\\503.86,538.947,586.667,596.491,658.246,668.772,684.912. $\\
\\
From the first set of data, we have $\mathcal{E}_{f}(\tilde{F}_{n})=-222.752$,  $\mathcal{E}_{f}^{w}(\tilde{F}_{n})=-104.13$, $\mathcal{E}_{f}(\tilde{F}_{n};t=1000)=-128.069$ and $\mathcal{E}_{f}^{w}(\tilde{F}_{n};t=1000)=-55.237.$ Further, from the second dataset, we compute $\mathcal{E}_{f}(\tilde{F}_{n})=-113.135$,  $\mathcal{E}_{f}^{w}(\tilde{F}_{n})=-50.9208$, $\mathcal{E}_{f}(\tilde{F}_{n};t=340)=-39.8138$ and $\mathcal{E}_{f}^{w}(\tilde{F}_{n};t=340)=-21.999.$
\end{example}
\begin{figure}[h]
\begin{center}
\subfigure[]{\label{c1}\includegraphics[height=2in]{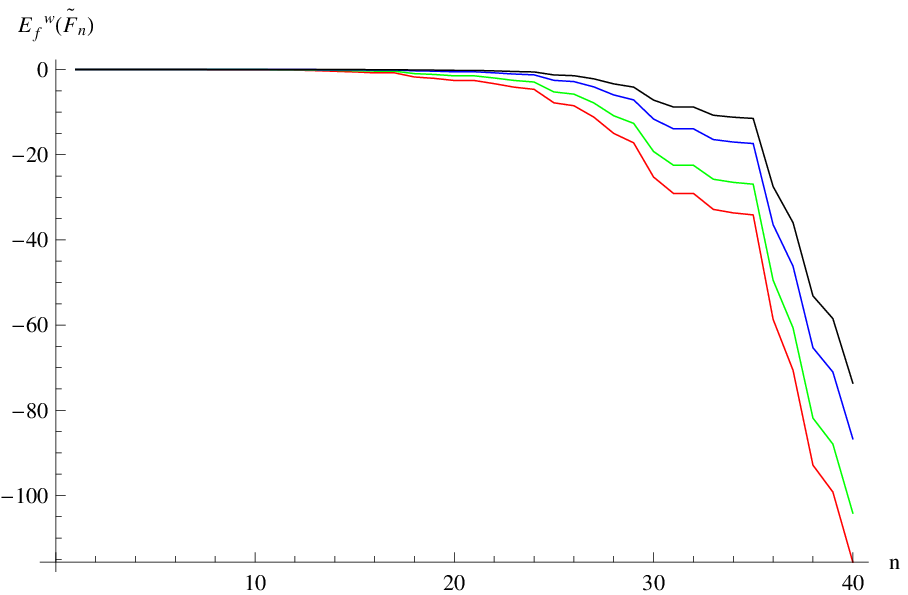}}
\subfigure[]{\label{c1}\includegraphics[height=2in]{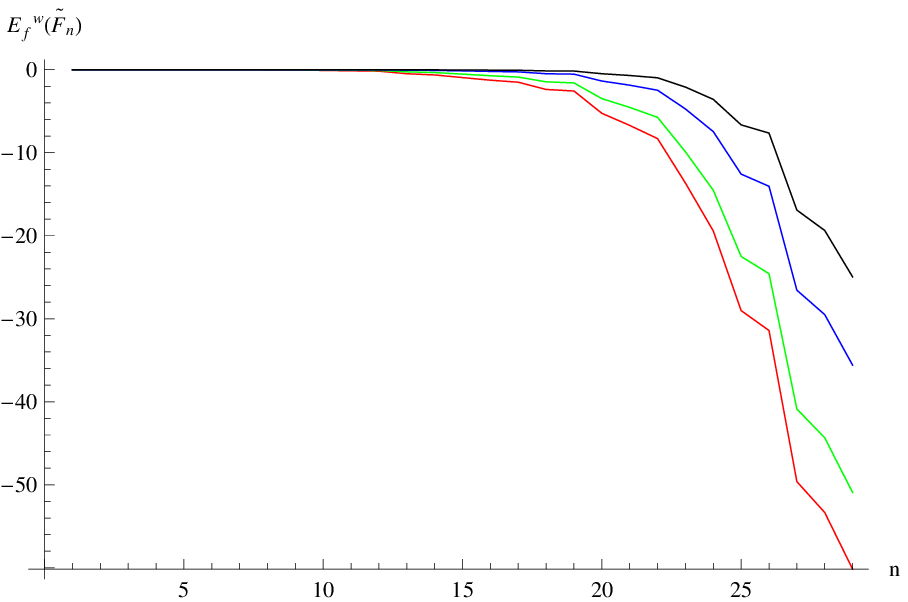}}
\caption{(a) Plot of the empirical estimator given by (\ref{eq5.8}) for the the first dataset in Example \ref{ex5.3} for various weight functions.  The weight functions are taken as $w(x)=\sqrt{x},~x,~x^2$ and $x^{3}$ (from below to above). (b) Plot of the empirical estimator given by (\ref{eq5.8}) for the the second dataset in Example \ref{ex5.3} for various weight functions.  The weight functions are taken as $w(x)=\sqrt{x},~x,~x^2$ and $x^{3}$ (from below to above).} \label{fig2}
\end{center}
\end{figure}
\section{Concluding remarks}
Motivated by the concepts of the cumulative entropy and the failure entropy, in this paper, we have proposed failure extropy, dynamic failure extropy and their weighted versions. For failure extropy, we investigated various properties. Specifically, bounds, effect of monotone transformations and two dimensional version are studied. The uncertainty order based on the failure extropy has been introduced. Connection to other stochastic orders are also investigated. It has been shown that the usual stochastic order implies the failure extropy order. Since the failure extropy is not an useful tool to quantify uncertainty, which relies in a failed system, we also introduced a dynamic version of it. Similar properties have been discussed for this dynamic measure. A nonparametric class and some characterization results have been developed. Further, we have defined these measures for the weighted random variables. Various properties have been studied. Finally, empirical estimators of the proposed measures have been provided and computed numerically from two real life datasets.\\
\\
\\
{\bf Acknowledgements:} The author would like to thank the Associate Editor and an anonymous reviewer for their positive remarks and useful comments. Suchandan Kayal gratefully acknowledges the partial financial support for this research work under a Grant MTR/2018/000350, SERB, India.
\bibliography{K-GLFR_bib}

\end{document}